\NeedsTeXFormat{LaTeX2e}
\documentclass[11pt]{amsart}
\usepackage{graphicx}
\usepackage{amsmath,amsxtra,amssymb,latexsym, amscd,amsthm}

\usepackage[colorlinks=true,citecolor=black,linkcolor=black,urlcolor=blue]{hyperref}

\textheight 22truecm \textwidth 15truecm
\newtheorem{theorem}{Theorem}[section]

\newtheorem{lemma}{Lemma}[section]
\newtheorem{proposition}{Proposition}[section]
\newtheorem{definition}{Definition}[section]

\newtheorem{remark}{Remark}
\numberwithin{equation}{section}
\numberwithin{figure}{section}
\newcommand{\M}{\operatorname{M}}

\newcommand{\U}{\operatorname{U}}
\newcommand{\D}{\operatorname{D}}
\newcommand{\F}{\operatorname{F}}
\newcommand{\Ba}{\operatorname{Bar}}
\pagestyle{myheadings}
\begin{document}
\setlength{\baselineskip}{16truept}
\title{A generalization of Aztec diamond theorem, part I}
\author{ Tri Lai}
\address{Indiana University, Bloomington IN 47405}
\date{\today}

\begin{abstract}
We generalize Aztec diamond theorem (N. Elkies, G. Kuperberg, M. Larsen, and J. Propp
\emph{Alternating-sign matrices and domino tilings}, Journal  Algebraic Combinatoric, 1992) by showing that the numbers of tilings of a certain family of regions in the square lattice with
 southwest-to-northeast diagonals drawn in are given by powers of $2$. We present a proof for the generalization by using a bijection between domino tilings and non-intersecting lattice paths.
\end{abstract}
\maketitle

\section{Introduction}
Given a lattice in the plane, a (lattice) \textit{region} is a finite connected union of fundamental regions of that lattice. A \textit{tile} is the union of two fundamental regions sharing an edge. A \textit{tiling} of the region $R$ is a covering of $R$ by tiles so that there are no gaps or overlaps.

A \textit{perfect matching} of a graph $G$ is a collection of edges such that each vertex of $G$ is adjacent to precisely one edge in the collection. Denote by $\M(G)$ the number of perfect matchings of  graph $G$.
The tilings of a region $R$ can be naturally identified with the perfect matchings of its \textit{dual graph} (i.e., the graph whose vertices are the fundamental regions of $R$, and whose edges connect two fundamental regions precisely when they share an edge). In the view of this, we denote by $\M(R)$ the number of tilings of $R$.

The \textit{Aztec diamond region} of order $n$ is defined to be the union of all the unit squares with integral corners $(x,y)$ satisfying $|x|+|y|\leq n+1$ in the Cartesian coordinate system  (see Figure \ref{aztecdiamond} for an example of Aztec diamond region of order $4$).
 The number of tilings of an Aztec diamond region is given by the following theorem that was first proved by Elkies, Kuperberg, Larsen and Propp \cite{Elkies}.

\begin{theorem}[Aztec diamond theorem \cite{Elkies}]\label{Aztecthm}
The number of (domino) tilings of the Aztec diamond of order $n$ is $2^{n(n+1)/2}$.
\end{theorem}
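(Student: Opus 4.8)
The plan is to count the domino tilings of the Aztec diamond by recoding each tiling as a family of non-intersecting lattice paths and then applying the Lindstr\"om--Gessel--Viennot lemma. Write $AD_n$ for the Aztec diamond region of order $n$. The first and decisive step is a bijection between the tilings of $AD_n$ and the $n$-tuples $(P_1,\dots,P_n)$ of pairwise non-intersecting lattice paths of \emph{Schr\"oder type} --- paths built from the three step vectors $(1,1)$, $(1,-1)$, and $(2,0)$ that never pass below a fixed horizontal line --- in which $P_i$ is required to go from a prescribed source $u_i$ to a prescribed sink $v_i$. To build such a map one tilts the diamond by $45^\circ$, sorts the dominoes of a given tiling into their four orientation classes, and extracts one Schr\"oder path from each ``diagonal level'' of the tiling by recording the positions of the dominoes of two chosen orientations. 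The content of this step is to check that the assignment is well defined, that its image is exactly the set of non-intersecting Schr\"oder families with the stipulated endpoints, and that it is invertible.

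Granting the bijection, I would choose the endpoints $u_1,\dots,u_n$ and $v_1,\dots,v_n$ on the horizontal line so that any tuple of non-intersecting paths necessarily joins $u_i$ to $v_i$ for each $i$ (the only compatible permutation is the identity). The Lindstr\"om--Gessel--Viennot lemma then gives
\[
\M(AD_n) \;=\; \det(m_{ij})_{0 \le i,j \le n-1},
\]
where $m_{ij}$ is the number of single Schr\"oder paths from $u_i$ to $v_j$. For the standard placement of the endpoints, $m_{ij}$ depends only on $i+j$ and equals a large Schr\"oder number, so the right-hand side is (after renaming indices) the Hankel determinant $\det(r_{i+j+1})_{0\le i,j\le n-1}$ of the large Schr\"oder numbers $r_0=1,\ r_1=2,\ r_2=6,\ r_3=22,\dots$.

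It remains to evaluate this Hankel determinant. I would use the Jacobi continued-fraction expansion
\[
\sum_{k\ge 0} r_k x^k \;=\; \cfrac{1}{1-2x-\cfrac{2x^2}{1-3x-\cfrac{2x^2}{1-3x-\cdots}}},
\]
in which the numerator at every level past the first equals $2x^2$; the classical correspondence between Hankel determinants of a moment sequence and the parameters of its continued fraction then forces $\det(r_{i+j+1})_{0\le i,j\le n-1}$ to be a power of $2$ whose exponent is $1+2+\cdots+n=n(n+1)/2$. (Equivalently, the evaluation can be obtained by Dodgson condensation together with the three-term recurrence for the minors involved, or simply quoted as a known Hankel-determinant identity for Schr\"oder numbers.)

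I expect the first step --- the bijection --- to be the main obstacle: one must verify precisely that the non-intersection of the paths is equivalent to the absence of gaps and overlaps among the dominoes, and that the endpoint data is forced by the region rather than imposed by hand. Once this correspondence is pinned down, the Lindstr\"om--Gessel--Viennot lemma applies automatically and the continued-fraction evaluation is routine. For comparison, the original proof of Elkies, Kuperberg, Larsen and Propp instead establishes the recursion $\M(AD_n)=2^{n}\,\M(AD_{n-1})$ by domino shuffling; the merit of the lattice-path route is that it packages the count into a single determinant, which is also the form best adapted to the generalizations developed in the remainder of this paper.
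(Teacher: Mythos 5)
The paper does not actually prove this statement: Theorem \ref{Aztecthm} is quoted from Elkies--Kuperberg--Larsen--Propp and is used as the base case ($k=1$) of the induction proving Theorem \ref{gendoug}, so there is no in-paper proof to match against. That said, your route is correct and is essentially the known proof of Eu and Fu \cite{Eu} (and Brualdi--Kirkland \cite{Brualdi}), which is precisely the barrier-free specialization of the machinery this paper builds: your bijection to non-intersecting Schr\"oder paths is Proposition \ref{prop1} with $k=1$, and your appeal to Lindstr\"om--Gessel--Viennot is Proposition \ref{prop2}. Where you diverge is in closing the argument: you evaluate the resulting Hankel determinant $\det(r_{i+j+1})_{0\le i,j\le n-1}$ of large Schr\"oder numbers via the $J$-fraction, whereas the paper's approach (Propositions \ref{prop3} and \ref{prop4}) replaces each large Schr\"oder path count by twice a small one and peels off an outer path to get the recursion $|\Pi_n|=2^n|\Pi_{n-1}|$, which is what survives the passage to arbitrary barrier configurations. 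One small caution on your determinant evaluation: the $\lambda_i=2$ alone give only the unshifted Hankel determinant $2^{n(n-1)/2}$; the shifted determinant additionally requires the quantity $h_n$ satisfying $h_n=b_{n-1}h_{n-1}-\lambda_{n-1}h_{n-2}$, which here equals $2^n$ because $b_0=2$, $b_i=3$ for $i\ge 1$ --- so the identity is true but is ``forced'' by the $b_i$'s as well as the $\lambda_i$'s, and that step deserves a sentence. With that caveat, and granting the standard verification of the tiling-to-path bijection, your outline is a complete and correct proof.
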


\begin{figure}\centering
\includegraphics[width=7cm]{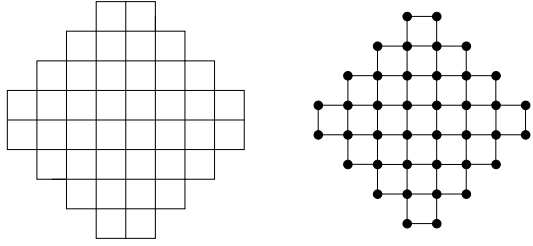}
\caption{The Aztec diamond region of order 4 (left) and its dual graph, the Aztec diamond graph of order 4 (right).}
\label{aztecdiamond}
\end{figure}

Douglas \cite{Doug} considered a certain family of regions in the square lattice with every second southwest-to-northeast diagonal drawn in (examples are shown in Figure \ref{douglas}). Precisely, the region of order $n$, denoted by $D(n)$,  has four vertices that are the vertices of a diamond of side-length $2n\sqrt{2}$.

\begin{theorem}[Douglas \cite{Doug}]\label{Dtheorem}
\begin{equation}
\M(D(n))=2^{2n(n+1)}.
\end{equation}
\end{theorem}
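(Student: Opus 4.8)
The plan is to prove Theorem~\ref{Dtheorem} by realizing the region $D(n)$ as a ``pinched'' or ``diagonally-striped'' Aztec-type region and then converting its domino tilings into families of non-intersecting lattice paths, exactly as announced in the abstract. First I would set up the dual graph $G(n)$ of $D(n)$ and describe explicitly how drawing in every second southwest-to-northeast diagonal modifies the square lattice: each drawn diagonal cuts certain unit squares into two triangular fundamental regions, so $G(n)$ is a hybrid graph with some degree-$4$ vertices (from untouched squares) and some lower-degree vertices (from the triangular halves). The key structural observation I would isolate as a lemma is that forced dominoes along the boundary and along the diagonal strips peel away, leaving a core region whose tilings are in bijection with those of a disjoint union of smaller pieces; iterating this peeling should already produce the factor $2^{\text{something}}$ and reduce $D(n)$ to (several copies of) an Aztec diamond or a closely related region.

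Next I would carry out the bijection with non-intersecting lattice paths. Orient the dominoes of $D(n)$ so that, via the standard ``height function'' / Lindström--Gessel--Viennot correspondence, a tiling becomes a family of $n$ (or $2n$) non-intersecting monotone lattice paths with prescribed start points on one side of the diamond and end points on the opposite side; the diagonal strips translate into a restriction forcing each path to take a ``diagonal step'' at certain prescribed heights. I would then argue that, because of where the diagonals are drawn, \emph{every} path is forced through a common sequence of bottleneck vertices, so the family decouples into independent segments. Counting the configurations in each segment — each a small determinant, or even just a direct count of $2$-step choices — and multiplying by LGV gives a product formula; the arithmetic should collapse to $2^{2n(n+1)}$.

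Alternatively, and perhaps more cleanly, I would try a direct induction on $n$: show $\M(D(n)) = c\cdot \M(D(n-1))$ for an explicit constant $c$ obtained by the peeling lemma above, with $c = 2^{4n}$, and check the base case $\M(D(1))$ by hand. The induction step is really the same peeling argument localized to the outermost diagonal strip of $D(n)$: identify the forced dominoes, show the unforced ones along that strip contribute an independent factor $2^{4n}$, and show what remains is exactly $D(n-1)$ (possibly after removing further forced dominoes). Telescoping $\prod_{k=1}^{n} 2^{4k} = 2^{2n(n+1)}$ then finishes it.

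The main obstacle I anticipate is the bookkeeping in the peeling/decoupling step: precisely describing which unit squares get subdivided by the drawn diagonals, verifying that the boundary dominoes are genuinely forced, and checking that the leftover region after peeling is \emph{exactly} $D(n-1)$ (or the right union of Aztec diamonds) rather than something slightly larger or smaller. Handling the corners of the diamond, where the diagonal strips meet the boundary, is where an off-by-one error is most likely to creep in, so I would treat the four corner neighborhoods as separate small cases and draw the figures carefully. Once the geometry of one peeling step is nailed down, the lattice-path reformulation and the final exponent count are routine.
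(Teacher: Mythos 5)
Your proposal does not close the gap at its central step, and the mechanism you invoke for producing the powers of $2$ is not the one that actually operates here. First, the ``peeling lemma'': there are no forced dominoes along the outer strip of $D(n)$ whose removal leaves $D(n-1)$ together with an independent factor $2^{4n}$. Already for the ordinary Aztec diamond the analogous claim fails --- the outer ring of the order-$n$ Aztec diamond is not rigid, and the factor $2^n$ in the recursion for $\M$ of Aztec diamonds comes from a global weight-preserving transformation (domino shuffling), not from locally independent binary choices. Your telescoping $\prod_{k=1}^{n}2^{4k}=2^{2n(n+1)}$ is consistent with the answer, but the recursion $\M(D(n))=2^{4n}\M(D(n-1))$ is precisely what needs proof, and ``identify the forced dominoes, show the unforced ones contribute an independent factor'' restates the claim rather than establishing it. Second, in the lattice-path variant, the non-intersecting Schr\"oder-type paths attached to a tiling of $D(n)$ are not all forced through a common sequence of bottleneck vertices, so the family does not decouple into independent segments; if it did, the count would reduce to a product of local choices with no determinant left to evaluate, and that structure simply is not present in these regions.

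The powers of $2$ actually arise from the relation between ``large'' and ``small'' Schr\"oder-type paths: for the barrier configuration coming from the drawn-in diagonals, the number $r_{i,j}$ of admissible paths from $A_i$ to $B_j$ equals $2s_{i,j}$, where $s_{i,j}$ counts those paths with no bad flat step (Proposition \ref{prop3}); pulling the factor $2$ out of every entry of the $w\times w$ Lindstr\"om--Gessel--Viennot matrix gives $\det(H_w)=2^w\det(G_w)$, and a further bijection (Proposition \ref{prop4}) identifies $\det(G_w)$ with the path count of a smaller region, so the exponent accumulates by iterating this determinant identity rather than by peeling forced tiles. Note also that the paper never proves Theorem \ref{Dtheorem} directly: it is obtained as the special case $k=2n$, $a=2n$, $d_1=d_k=1$, $d_2=\dotsb=d_{k-1}=2$ of Theorem \ref{gendoug}. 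To make your sketch into a proof you would need either to carry out the Schr\"oder-path determinant computation for this specific barrier pattern, or to make the shuffling/reduction step precise in the sense of Propp's generalized domino shuffling; neither is currently done.
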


\begin{figure}
\centering
\begin{picture}(0,0)%
\includegraphics{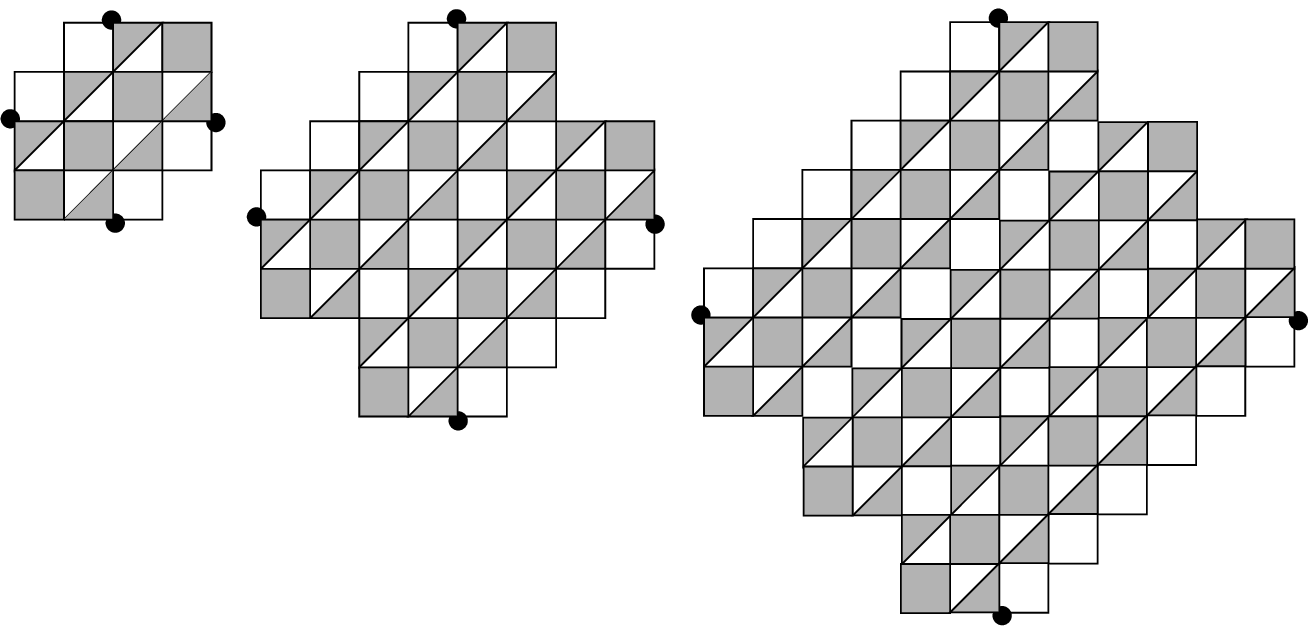}%
\end{picture}%
\setlength{\unitlength}{3947sp}%
\begingroup\makeatletter\ifx\SetFigFont\undefined%
\gdef\SetFigFont#1#2#3#4#5{%
  \reset@font\fontsize{#1}{#2pt}%
  \fontfamily{#3}\fontseries{#4}\fontshape{#5}%
  \selectfont}%
\fi\endgroup%
\begin{picture}(6283,2966)(402,-2292)
\put(2363,-1641){\makebox(0,0)[lb]{\smash{{\SetFigFont{12}{14.4}{\familydefault}{\mddefault}{\updefault}{$n=2$}%
}}}}
\put(6260,-1878){\makebox(0,0)[lb]{\smash{{\SetFigFont{12}{14.4}{\familydefault}{\mddefault}{\updefault}{$n=3$}%
}}}}
\put(828,-696){\makebox(0,0)[lb]{\smash{{\SetFigFont{12}{14.4}{\familydefault}{\mddefault}{\updefault}{$n=1$}%
}}}}
\end{picture}
\caption{The Douglas' regions of order $n=1$, $n=2$ and $n=3$.}
\label{douglas}
\end{figure}

The regions in the Douglas' theorem have the distances\footnote{The unit here is the distance between two consecutive lattice diagonals of the square lattice, i.e. $\sqrt{2}/2$.} between any two successive southwest-to-northeast diagonals drawn in are 2. Next, we consider general situation when the distances  between two successive drawn-in diagonals\footnote{ From now on, ``diagonal(s)" refers to ``southwest-to-northeast diagonal(s)"} are \textit{arbitrary}.

Consider the setup of drawn-in diagonals in the square lattice as follows. Let $\ell$ and $\ell'$ be two fixed lattice diagonals ($\ell$ and $\ell'$ are \textit{not} drawn-in diagonals), and assume that $k-1$ diagonals have been drawn in between $\ell$ and $\ell'$, with the distances between successive ones, starting from top, being $d_2,\dotsc,d_{k-2}$. The distance between $\ell$ and the top  drawn-in diagonal is $d_1$, and the distance between the bottom drawn-in diagonal and $\ell'$ is $d_k$.

Given a positive integer $a$, we define the region $D_a(d_1,\dotsc,d_k)$ as follows (see Figure \ref{DouglasSchroder} for an example).  Its southwestern and northeastern boundaries are defined in the next two paragraphs.

Color the resulting dissection of the square lattice black and white so that any two fundamental regions that share an edge have opposite colors, and assume that the fundamental regions passed through by $\ell$ are white (by definition $\ell$ and $\ell'$ pass through unit squares). Let $A$ be a lattice point on $\ell$. Start from $A$ and take unit steps south or east so that for each step the color of the fundamental region on the left is black. We arrive $\ell'$ at a lattice point $B$. The described path from $A$ to $B$ is the northeastern boundary of our region.

Let $D$ be the lattice point on $\ell$ that is $a$ unit square diagonals to the southwest of $A$ (i.e. $|AD|=a\sqrt{2}$).
The southwestern boundary is obtained from the northeastern  boundary by reflecting it about the perpendicular bisector of segment $AD$,
and reversing the directions of its unit steps (from south to north, and from east to west). Let $C$ be the reflection point of $B$ about the perpendicular bisector above, so $C$ is also on $\ell'$.

\begin{figure}\centering
\resizebox{!}{8cm}{
\begin{picture}(0,0)%
\includegraphics{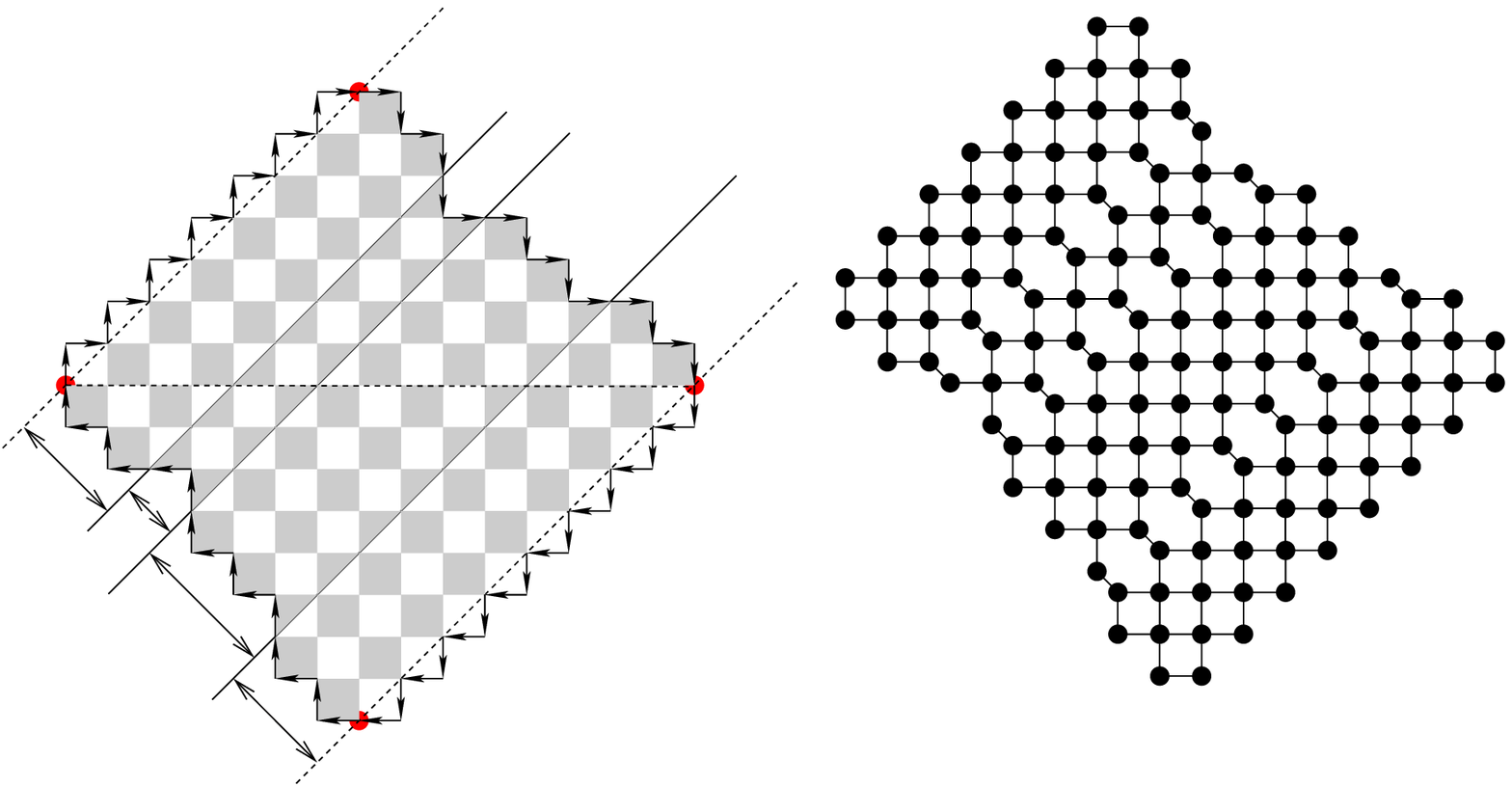}%
\end{picture}%
\setlength{\unitlength}{3947sp}%
\begingroup\makeatletter\ifx\SetFigFont\undefined%
\gdef\SetFigFont#1#2#3#4#5{%
  \reset@font\fontsize{#1}{#2pt}%
  \fontfamily{#3}\fontseries{#4}\fontshape{#5}%
  \selectfont}%
\fi\endgroup%
\begin{picture}(8479,4719)(1198,-4323)
\put(3103,-56){\makebox(0,0)[lb]{\smash{{\SetFigFont{12}{14.4}{\rmdefault}{\mddefault}{\updefault}{$A$}%
}}}}
\put(5347,-2182){\makebox(0,0)[lb]{\smash{{\SetFigFont{12}{14.4}{\rmdefault}{\mddefault}{\updefault}{$B$}%
}}}}
\put(3221,-4308){\makebox(0,0)[lb]{\smash{{\SetFigFont{12}{14.4}{\rmdefault}{\mddefault}{\updefault}{$C$}%
}}}}
\put(1213,-1946){\makebox(0,0)[lb]{\smash{{\SetFigFont{12}{14.4}{\rmdefault}{\mddefault}{\updefault}{$D$}%
}}}}
\put(3694,180){\makebox(0,0)[lb]{\smash{{\SetFigFont{12}{14.4}{\rmdefault}{\mddefault}{\updefault}{$\ell$}%
}}}}
\put(5465,-1473){\makebox(0,0)[lb]{\smash{{\SetFigFont{12}{14.4}{\rmdefault}{\mddefault}{\updefault}{$\ell'$}%
}}}}
\put(1332,-2772){\makebox(0,0)[lb]{\smash{{\SetFigFont{12}{14.4}{\rmdefault}{\mddefault}{\updefault}{$d_1$}%
}}}}
\put(1686,-3127){\makebox(0,0)[lb]{\smash{{\SetFigFont{12}{14.4}{\rmdefault}{\mddefault}{\updefault}{$d_2$}%
}}}}
\put(1922,-3599){\makebox(0,0)[lb]{\smash{{\SetFigFont{12}{14.4}{\rmdefault}{\mddefault}{\updefault}{$d_3$}%
}}}}
\put(2394,-4190){\makebox(0,0)[lb]{\smash{{\SetFigFont{12}{14.4}{\rmdefault}{\mddefault}{\updefault}{$d_4$}%
}}}}
\end{picture}}
\caption{The region $D_{7}(4,2,5,4)$ (left) and its dual graph (right).}
\label{DouglasSchroder}
\end{figure}

Connect $D$ and $A$ by a zigzag lattice path consisting of alternatively east and north steps, so that the unit squares passed through by $\ell$ are on the right of the zigzag path. Similarly, we connect $B$ and $C$ by a zigzag lattice path, so that the square cells  passed through by $\ell'$ are on the right. These two zigzag lattice paths are northwestern and southeastern boundaries, and they complete the boundary of the region $D_a(d_1,\dotsc,d_k)$. We call the resulting region a \textit{generalized Douglas region}.

\begin{remark}\label{rmk1}
(1) If the line $\ell'$ passes through black unit squares, then the region does not have a tiling (since we can not cover the black squares by disjoint tiles). Hereafter, \textit{ we assume that $\ell'$ passes through white unit square.}

(2) Since we only consider connected region, \textit{we also assume from now on that the southwestern and northeastern boundaries do not intersect each other.}
\end{remark}

We call the fundamental regions in a generalized Douglas region \textit{cells}.  Note that there are two kinds of cells, square and triangular. The latter in turn come in two orientations: they may point towards $\ell'$ or away from $\ell'$. We call them \textit{down-pointing triangles} or \textit{up-pointing triangles}, respectively.  A cell is said to be \textit{regular} if it is a black square or a black up-pointing triangle.

A \textit{row of cells} consists of all the triangular cells of a given color with bases resting on a fixed lattice diagonal, or  consists of all the square cells (of a given color) passed through by a fixed lattice diagonal.
Define the \textit{width} of our region to be the number of white squares in the bottom row of cells. One readily sees that the width of the region is exactly $|BC|/\sqrt{2}$, where $|BC|$ is the Euclidian distance between $B$ and $C$. The number of tilings of a generalized Douglas region is obtained by the theorem stated below.

\begin{theorem}\label{gendoug}
Assume that  $a,$ $d_1,$ $\dotsc,$ $d_k$ are positive integers, so that for which the generalized Douglas region $D_{a}(d_1,\dotsc,d_k)$ has the width  $w$, and has its western and eastern vertices (i.e. the vertices $B$ and $D$) on the same horizontal line. Then
\begin{equation}\label{gendougeq}
\M(D_a(d_1,\dotsc,d_k))=2^{\mathcal{C}-w(w+1)/2}
\end{equation}
where $\mathcal{C}$ is the number of regular cells in the region.
\end{theorem}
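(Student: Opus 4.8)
The plan is to convert tilings of $D_a(d_1,\dots,d_k)$ into families of non-intersecting lattice paths, apply the Lindstr\"{o}m--Gessel--Viennot (LGV) lemma to obtain a determinant formula for $\M(D_a(d_1,\dots,d_k))$, and then evaluate that determinant. For the first step I would pass to the dual graph and peel off the tiles forced by the two zigzag boundaries and by the triangular cells, just as the boundary of an Aztec diamond forces a cascade of tiles; after the forced part is deleted, the residual freedom should be carried by $w$ pairwise disjoint lattice paths, one anchored at each of the $w$ white squares of the bottom row of cells. Each path runs from a source on one zigzag boundary, up through the region, to a sink on the opposite staircase boundary, and a single move of a path records which of two local choices is made where the path crosses a lattice diagonal: crossing a drawn-in diagonal carries a local weight $2$, while the bands of square cells between consecutive drawn-in diagonals force longer weight-$1$ moves. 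After pinning down the source and sink coordinates in terms of $A,B,C,D$ and $d_1,\dots,d_k$ and checking that tilings correspond bijectively to weighted families of such paths, $\M(D_a(d_1,\dots,d_k))$ becomes the total weight of all non-intersecting families of $w$ of these paths.

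Next I would apply LGV. This first requires checking that, with the chosen sources $u_1,\dots,u_w$ and sinks $v_1,\dots,v_w$, every non-intersecting family joins $u_i$ to $v_i$; the hypothesis that $B$ and $D$ lie on a common horizontal line is what makes the source and sink patterns regular enough for this non-permutability to hold (and, later, for the determinant to collapse to a power of $2$). LGV then yields $\M(D_a(d_1,\dots,d_k))=\det(P_{ij})_{1\le i,j\le w}$, where $P_{ij}$ is the sum of the path weights over all single lattice paths from $u_i$ to $v_j$. These entries are weighted path counts that generalize the large Schr\"{o}der numbers, with indices that are linear functions of $a$ and the $d_i$.

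The determinant evaluation is the main obstacle. The structural input I would aim to isolate is that $(P_{ij})$ is obtained from a simpler matrix $(Q_{ij})$ by scaling row $i$ by $2^{f_i}$ and column $j$ by $2^{g_j}$, with $f_i$ depending only on the row and $g_j$ only on the column, so that $\det(P_{ij})=2^{\sum_i f_i+\sum_j g_j}\det(Q_{ij})$, and that $\det(Q_{ij})$ is itself a power of $2$ --- in the Aztec and Douglas cases the $Q_{ij}$ are large Schr\"{o}der numbers and the Hankel determinant equals $2^{w(w+1)/2}$, a known evaluation that can be cited or reproved by Dodgson condensation or a continued-fraction expansion. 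It then remains to match exponents: one must check by a direct cell count that $\sum_i f_i+\sum_j g_j=\mathcal{C}-w(w+1)$, so that the total exponent becomes $\mathcal{C}-w(w+1)/2$, by pairing the regular cells (black squares and black up-pointing triangles) against the move types of the path system. As an alternative to the determinant step one could try induction on $k$, with base case $k=1$ --- no drawn-in diagonals, an Aztec-diamond-type region covered by Theorem \ref{Aztecthm} --- and an inductive step that collapses one band of square cells while multiplying $\M$ and shifting $\mathcal{C}$ by matching powers of $2$. In either approach the subtle points are the non-permutability verification underlying LGV and the exact bookkeeping of how $\mathcal{C}$ and $w$ respond to the reductions.
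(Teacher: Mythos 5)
Your overall skeleton --- encode tilings as non-intersecting lattice paths, apply Lindstr\"{o}m--Gessel--Viennot, and extract powers of $2$ --- matches the paper's strategy, and your parenthetical ``alternative'' (induction on the number of layers $k$, with base case the Aztec diamond) is in fact the route the paper actually takes. The gap is in the step you yourself flag as the main obstacle: the determinant evaluation. You propose that the LGV matrix $(P_{ij})$ is a row- and column-rescaling by powers of $2$ of a matrix whose determinant is a known Hankel determinant of large Schr\"{o}der numbers. This is not available here. Once the drawn-in diagonals are encoded (in the paper, as barriers $\Ba(a_1,\dotsc,a_{k-1})$ along the lines $y=x+a_i$), the matrix entries are counts of Schr\"{o}der paths \emph{compatible with the barriers}; these are not obtained from ordinary Schr\"{o}der numbers by any factorization of the form $2^{f_i+g_j}Q_{ij}$, and there is no known closed evaluation of $\det(H_w(a_1,\dotsc,a_{k-1}))$ to cite. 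Your weighted-path model (local weight $2$ where a path crosses a drawn-in diagonal) also does not reflect where the factors of $2$ actually come from: in the paper the paths are unweighted, and each factor $2^w$ arises globally from the bijection of Proposition~\ref{prop3} between compatible Schr\"{o}der paths having at least one bad flat step and those having none, which gives $r_{i,j}=2s_{i,j}$ entrywise and hence $\det(H_w)=2^w\det(G_w)$.

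What is missing, then, is the engine of the induction. The paper iterates $|\Pi_w|=\det(H_w)=2^w\det(G_w)=2^w|\Lambda_w|$, and then uses explicit path bijections (Proposition~\ref{prop4}) to convert $|\Lambda_w|$ back into a $|\Pi_{w-1}|$ with the barrier positions shifted by $2$. Repeating this $x$ times (where $d_k=2x$ or $2x+1$) strips off the last layer of the region, and the resulting path count is identified, via Proposition~\ref{prop1} applied a second time, with $\M$ of a smaller generalized Douglas region having $k-1$ layers; the induction hypothesis plus a careful count of how $\mathcal{C}$ and $w$ change under this surgery (and two degenerate subcases handled by direct path enumeration) finishes the proof. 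Without this reduction mechanism, or some substitute for it, neither your determinant route nor your one-sentence induction on $k$ closes, so as written the proposal does not constitute a proof.
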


Let $k=1$ (i.e. there are \textit{no} dawn-in diagonals between $\ell$ and $\ell'$) and $a=d_1=n$, our generalized Douglas region, $D_a(d_1,\dotsc,d_k)$,
is exactly the Aztec diamond region of order $n$.
One readily sees that the region has the width $w=n$ and the number of regular cells $\mathcal{C}=n(n+1)$. This means that we can imply Aztec diamond theorem \ref{Aztecthm} from  Theorem \ref{gendoug}.

Moreover, one can get Douglas' theorem \ref{Dtheorem} from the Theorem \ref{gendoug} by setting   $k=2n\geq 2$, $d_1=d_k=1$, $a=k$, and $d_2=d_3=\dotsc=d_{k-1}=2$.
 Therefore, Theorem \ref{gendoug} can be view as a common multi-parameter generalization of Aztec diamond theorem and Douglas' theorem.

 For the sake of simplicity, hereafter,  ``\textit{square(s)}" refers to ``square cell(s)", and ``\textit{triangle(s)}" refers to ``triangular cells".


The goal of this paper is to prove Theorem \ref{gendoug} by using a bijection between domino tilings and non-intersecting lattice paths.

\section{Structure of generalized Douglas regions}

Our goal of this section is to investigate further the structure of generalized Douglas regions.

\begin{figure}\centering
\resizebox{!}{7cm}{
\begin{picture}(0,0)%
\includegraphics{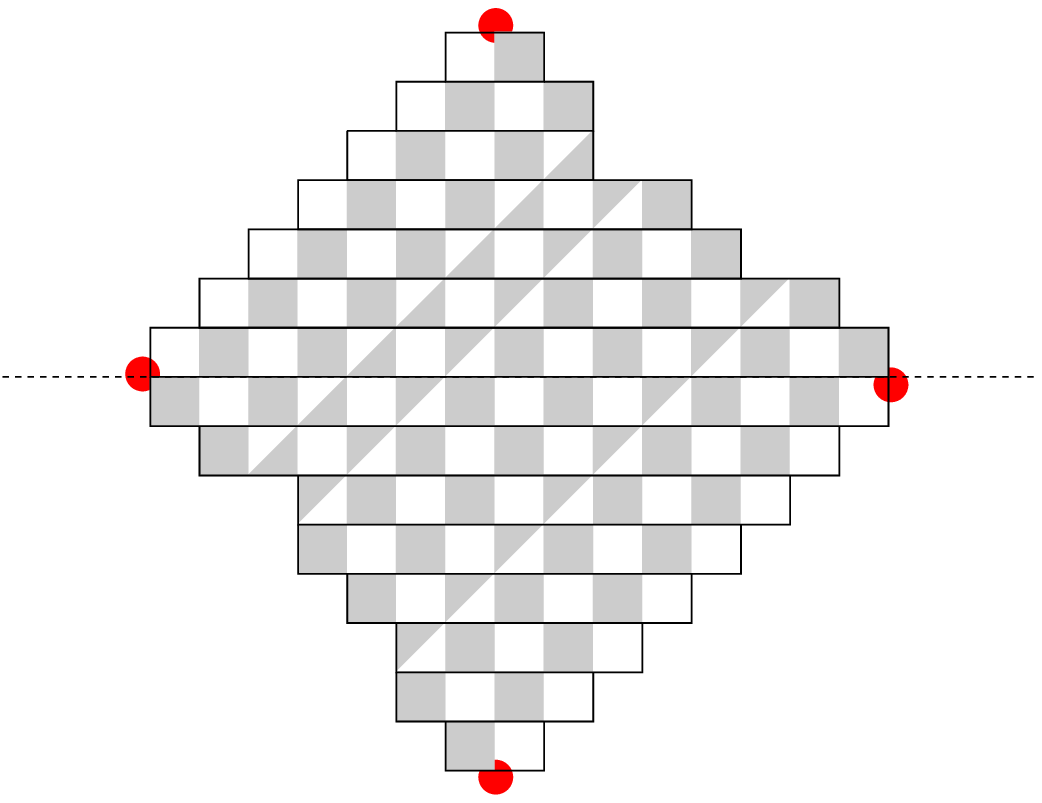}%
\end{picture}%
\setlength{\unitlength}{3947sp}%
\begingroup\makeatletter\ifx\SetFigFont\undefined%
\gdef\SetFigFont#1#2#3#4#5{\reset@font\fontsize{#1}{#2pt}
  \fontfamily{#3}\fontseries{#4}\fontshape{#5}
  \selectfont}
\fi\endgroup
\begin{picture}(4985,4362)(225,-3779)
\put(2481,367){\makebox(0,0)[lb]{\smash{{\SetFigFont{12}{14.4}{\rmdefault}{\mddefault}{\updefault}{$A$}}}}}
\put(2597,-3764){\makebox(0,0)[lb]{\smash{{\SetFigFont{12}{14.4}{\rmdefault}{\mddefault}{\updefault}{$C$}}}}}
\put(589,-1402){\makebox(0,0)[lb]{\smash{{\SetFigFont{12}{14.4}{\rmdefault}{\mddefault}{\updefault}{$D$}}}}}
\put(4725,-1405){\makebox(0,0)[lb]{\smash{{\SetFigFont{12}{14.4}{\rmdefault}{\mddefault}{\updefault}{$B$}}}}}
\end{picture}}
\caption{Partitioning a generalized Douglas region into horizontal strips of cells.}
\label{strip}
\end{figure}

Consider a generalized Douglas region $D_{a}(d_1,\dotsc,d_k)$. Denote by $p$  the number of rows of black square cells, denote by $q$ the number of rows of black up-pointing triangular cells, and denote by $l$ the number of rows of
black down-pointing triangular cells in the region.

The region $D_{a}(d_1,\dotsc,d_k)$ can be partitioned into $a$ horizontal strips of cells above $BD$ and $w$ horizontal strips of cells below $BD$ (see Figure \ref{strip} for an example with $a=7$, $k=4$, $d_1=4$, $d_2=2$, $d_3=5$, $d_4=4$). Consider the horizontal strips above segment $BD$. Each of them starts by a white square in the top row of cells, and ends by a black square or a black down pointing-triangle along the northeastern boundary of the region. Compare the number of starting cells and the number of ending cells in those strips, we get
 \begin{equation}\label{constraint1}
 a=p+l.
 \end{equation}

We consider now the horizontal strips below the segment $BD$. Each of them starts by a black square or a black up-pointing triangle along the southwestern boundary, and ends by a white square in the bottom rows of cells. Again, we compare of the number of starting cells and the number of ending cells in those strips, and obtain
 \begin{equation}\label{constraint2}
w=p+q.
 \end{equation}
From (\ref{constraint1}) and (\ref{constraint2}), we get
\begin{subequations}
\begin{equation}\label{constraint1'}
a+q-l=p+q=w,
\end{equation}
 \begin{equation}\label{constraint3}
 a+w=2p+q+l.
 \end{equation}
\end{subequations}

Consider the number of unit steps on the southwestern boundary of the region. Each row of black squares contributes 2 steps, and each row of black triangles contributes 1
 step to the latter number of steps. Thus, the number of steps here is exactly the expression on the right hand side of (\ref{constraint3}).
 On the other hand, one readily see that the number of steps on the southwestern boundary is equal to the sum of all distances $d_i$'s. Therefore,
\begin{equation}\label{constraint4}
 a+w=2p+q+l=\sum_{i=1}^k d_i.
\end{equation}

For  each of $k-1$ drawn-in diagonals of $D_{a}(d_1,\dotsc,d_k)$, there is exactly one row of black up-pointing triangles or one row of black down-pointing triangles with bases resting on it.
This implies that the number of rows of black triangles is equal to $k-1$, i.e.
\begin{equation}\label{constraint5}
q+l=k-1.
\end{equation}

The $k-1$ drawn-in diagonals divide the region into $k$ parts called \textit{layers}. The first layer is the part above the top drawn-in diagonal, the last layer is the part below the bottom drawn-in diagonal, and the $i$-th layer (for $1<i<k$) is the part between the $(i-1)$-th and the $i$-th drawn-in diagonals.

\section{Schr\"{o}der paths with barriers}

A \textit{Schr\"{o}der path} is a path in the plane, starting and ending on the $x$-axis, never going below the $x$-axis, using $(1, 1)$, $(1,-1)$ and $(2,0)$ steps
 (i.e. (diagonally) up, (diagonally) down and  flat steps, respectively). Denote by $\U$, $\D$, and $\F$ the up, down and  flat steps, respectively.

A \textit{barrier} is a length-1 horizontal segment in the plane. A Schr\"{o}der path is said to be \textit{compatible} with a setup of barriers if it does not cross any barriers of the setup.

 Let $a_1,\dots,a_{m}$ be nonnegative integers so that $a_1<a_2<\dotsc<a_m$. We consider a setup of barriers as follows. For any $k\in\mathbb{Z}$ and $1\leq i\leq m$,
  we draw a barrier connecting two points $(-a_i+k,k+\frac{1}{2})$ and $(-a_i+k+1,k+\frac{1}{2})$ (i.e. all barriers appear along the lines $y=x+a_i$, for $i=1,2,\dotsc,m$).
   Denote by $\Ba(a_1,a_2,\dotsc,a_m)$ the resulting setup of barriers.
   A \textit{bad flat step} (with respect to the setup $\Ba(a_1,a_2,\dotsc,a_m)$)  of a Schr\"{o}der path  is a flat step from $(x,0)$ to $(x+2,0)$, where $x\notin\{-a_i-1: 1\leq i\leq m\}$.
   Figure \ref{schroderfig} illustrates an example of a Schr\"{o}der path compatible with the set of barriers $\Ba(2,5,8)$, and the path has a bad flat step from $(1,0)$ to $(3,0)$.

\begin{figure}\centering
\includegraphics[width=10cm]{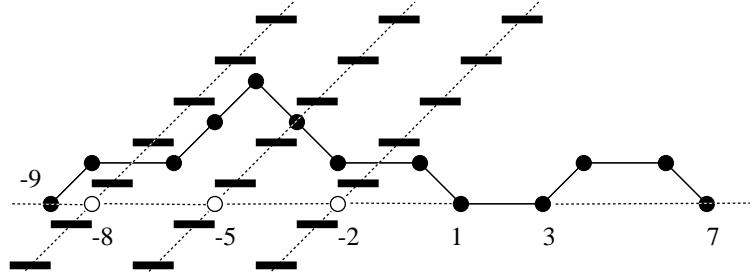}
\caption{A Schr\"{o}der path compatible with barrier set $\Ba(2,5,8)$.}
\label{schroderfig}
\end{figure}

Let $x_{i}$ be the $i$-th largest negative odd number in $\mathbb{Z}\backslash\{-a_1, \dotsc,-a_m\}$, let $A_{i}$ be the point $(x_i,0)$, and let $B_i$ be the point $(2i-1,0)$, for $i=1,2,\dotsc,n$.
 We consider two sets of $n$-tuples  of non-intersecting Schr\"{o}der paths compatible with $\Ba(a_1,a_2,\dotsc,a_m)$ as follows.

The set $\Pi_n(a_1,\dots,a_m)$ consists of $n$-tuples of non-intersecting Schr\"{o}der paths $(\pi_1,$ $\pi_2,$ $\dotsc,$ $\pi_n)$ (compatible with $\Ba(a_1,a_2,\dotsc,a_m)$),
 where $\pi_i$ connects two points $A_i$ and $B_i$. The set $\Lambda_n(a_1,\dots,a_m)$ consists of $n$-tuples of non-intersecting Schr\"{o}der paths $(\lambda_1,$ $\lambda_2,$ $\dotsc,$ $\lambda_n)$
  (compatible with $\Ba(a_1,a_2,\dotsc,a_m)$), where $\lambda_i$ connects $A_i$ and $B_i$, and has no bad flat steps.


Next, we quote a classic result about non-intersecting paths.

\begin{definition}
Let $G=(V,E)$ be an acyclic directed graph. If $I$ and $J$ are two ordered sets of vertices of $G$, then $I$ is said to be compatible with $J$ if,
whenever $u<u'$ in $I$ and $v>v'$ in $J$, every path $P\in \mathcal{P}(u,v)$ intersects every path $Q\in \mathcal{P}(u',v')$, where $\mathcal{P}(u,v)$
 (resp., $\mathcal{P}(u',v')$) is the set of paths in $G$ from $u$ to $v$ (resp., from $u'$ to $v'$).
\end{definition}
The following result is due to Lindstr\"{o}m-Gessel-Viennot (see \cite{GV}; \cite{Lind}, Lemma 1; \cite{Stem} Theorem 1.2)
\begin{lemma}\label{Lin}
Let $U=(u_1,u_2,\dotsc,u_n)$ and $V=(v_1,v_2,\dotsc,v_n)$ be two $n$-tuples of vertices in an acyclic digraph $G$.
If $U$ is compatible with $V$, then the number of $n$-tuples of non-intersecting paths connecting vertices in $U$ to vertices in $V$ is equal to $\det \left(\left(a_{i,j}\right)_{1\leq i,j\leq n}\right)$,
 where $a_{i,j}$ is the number of paths in $G$ from $u_i$ to $v_j$.
\end{lemma}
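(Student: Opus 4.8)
The plan is to prove Lemma \ref{Lin} by the now-standard sign-reversing involution argument going back to Lindstr\"{o}m. First I would expand the determinant as a signed sum over the symmetric group,
\[
\det\left((a_{i,j})_{1\le i,j\le n}\right)=\sum_{\sigma\in S_n}\operatorname{sgn}(\sigma)\prod_{i=1}^{n}a_{i,\sigma(i)},
\]
and interpret each product combinatorially: since $a_{i,j}=|\mathcal P(u_i,v_j)|$, the quantity $\prod_i a_{i,\sigma(i)}$ counts the $n$-tuples $\mathbf P=(P_1,\dots,P_n)$ of paths of $G$ with $P_i\in\mathcal P(u_i,v_{\sigma(i)})$. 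Hence the determinant equals $\sum\operatorname{sgn}(\sigma)$, the sum taken over all pairs $(\sigma,\mathbf P)$ of a permutation and such a compatible path tuple. I would then split this set of pairs into those for which $\mathbf P$ is \emph{non-intersecting} (no two of the $P_i$ share a vertex) and those for which some two paths meet.

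For the non-intersecting pairs I would invoke the compatibility of $U$ with $V$. If $\sigma$ had an inversion, say $i<i'$ with $\sigma(i)>\sigma(i')$, then $u_i<u_{i'}$ in $U$ and $v_{\sigma(i)}>v_{\sigma(i')}$ in $V$, so by the defining property every path from $u_i$ to $v_{\sigma(i)}$ meets every path from $u_{i'}$ to $v_{\sigma(i')}$; in particular $P_i$ and $P_{i'}$ intersect, a contradiction. Thus $\sigma=\operatorname{id}$ and $\operatorname{sgn}(\sigma)=1$, so the non-intersecting pairs contribute precisely the number of non-intersecting $n$-tuples $(P_1,\dots,P_n)$ with $P_i\in\mathcal P(u_i,v_i)$, which is exactly the number we wish to compute.

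It remains to show the intersecting pairs cancel in the signed sum, and this is the crux. I would define an involution on them as follows. Given $(\sigma,\mathbf P)$ in which some two paths meet, let $i$ be the least index such that $P_i$ meets another path; travelling along $P_i$ from $u_i$, let $v$ be the first vertex of $P_i$ that also lies on some $P_j$ with $j\ne i$; and let $j$ be the least such index. Swapping the portions of $P_i$ and $P_j$ after $v$ yields a new tuple $\mathbf P'$ in which the path issuing from $u_i$ now ends at $v_{\sigma(j)}$ and the path from $u_j$ now ends at $v_{\sigma(i)}$, so the associated permutation becomes $\sigma'=\sigma\circ(i\,j)$ with $\operatorname{sgn}(\sigma')=-\operatorname{sgn}(\sigma)$, while the new tuple still has an intersection (at $v$). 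The delicate point, and the step I expect to be the main obstacle, is verifying that this map is a genuine involution: one must use acyclicity of $G$ to rule out a path revisiting a vertex, check that the minimal offending index is still $i$ for $\mathbf P'$ (no $P_m$ with $m<i$ can meet the rerouted tails, for otherwise $P_m$ already met a path in $\mathbf P$, contradicting minimality of $i$), check that $v$ remains the first shared vertex along the new path from $u_i$, and check that $j$ remains the least index whose path passes through $v$ — so that re-applying the recipe to $(\sigma',\mathbf P')$ restores $(\sigma,\mathbf P)$. Since paired pairs carry opposite signs, the intersecting pairs contribute $0$; combining this with the previous paragraph gives the claimed formula, completing the proof.
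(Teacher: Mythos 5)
The paper offers no proof of Lemma \ref{Lin}: it is quoted as a classical result with references to Lindstr\"om, Gessel--Viennot, and Stembridge, so there is nothing internal to compare against. Your argument is precisely the standard tail-swapping proof from those sources, and it is correct: the determinant expands as a signed sum over pairs $(\sigma,\mathbf P)$, compatibility forces $\sigma=\operatorname{id}$ on the non-intersecting terms, and the intersecting terms cancel under the involution. The verification you flag as the delicate step does go through with your choices (least offending index $i$, first shared vertex $v$ along $P_i$, extremal index $j$ through $v$): acyclicity guarantees the rerouted walks are genuine paths (a repeated vertex on the spliced path would close a directed cycle through $v$), the union of the vertex sets of $P_i$ and $P_j$ is unchanged by the swap so no new intersections with $P_m$, $m<i$, are created, the initial segment of $P_i$ before $v$ cannot meet the new tail without violating either the minimality of $v$ or acyclicity, and the set of paths through $v$ is preserved, so $j$ is recovered and the map is an involution negating $\operatorname{sgn}(\sigma)$.
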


Given a setup of barriers $\Ba(a_1,\dots,a_m)$, where $a_1<a_2<\dotsc<a_m$. We define
\begin{equation}
H_n:=H_{n}(a_1,\dotsc,a_m):=\begin{bmatrix}
r_{1,1}&r_{1,2}&\dotsc&r_{1,n}\\
r_{2,1}&r_{2,2}&\dotsc&r_{2,n}\\
\vdots&\vdots&\ &\vdots\\
r_{n,1}&r_{n,2}&\dotsc&r_{n,n}
\end{bmatrix},
\end{equation}
 and
\begin{equation}
G_n:=G_{n}(a_1,\dotsc,a_m):=\begin{bmatrix}
s_{1,1}&s_{1,2}&\dotsc&s_{1,n}\\
s_{2,1}&s_{2,2}&\dotsc&s_{2,n}\\
\vdots&\vdots&\ &\vdots\\
s_{n,1}&s_{n,2}&\dotsc&s_{n,n}
\end{bmatrix},
\end{equation}
where $r_{i,j}$ (resp., $s_{i,j}$) is the number of Schr\"{o}der paths (resp., Schr\"{o}der paths without bad flat steps) from $A_i$ to $B_j$, where $A_i=(x_i,0)$
 with $x_{i}$ is the $i$th largest negative odd integer in $\mathbb{Z}\backslash\{-a_1, \dotsc,-a_m\}$, and where $B_j=(2j-1,0)$, for $1\leq i,j\leq n$.

\begin{proposition}\label{prop2} For any positive integers $n$ and $m$, we have
\begin{equation}\label{prop2eq1}
|\Pi_n(a_1,\dotsc,a_m)|=\det(H_n(a_1,\dotsc,a_m)),
\end{equation}
\begin{equation}\label{prop2eq2}
|\Lambda_n(a_1,\dotsc,a_m)|=\det(G_n(a_1,\dotsc,a_m)).
\end{equation}
\end{proposition}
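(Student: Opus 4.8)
The plan is to present $\Pi_n(a_1,\dotsc,a_m)$ and $\Lambda_n(a_1,\dotsc,a_m)$ as sets of $n$-tuples of non-intersecting directed paths in two acyclic digraphs, and then to apply Lemma \ref{Lin}. First I would build a digraph $\mathcal{G}$ carrying all barrier-compatible Schr\"{o}der paths: its vertices are the lattice points $(x,y)$ with $y\geq 0$ and $x+y$ odd, and its edges are the up-steps $(x,y)\to(x+1,y+1)$, the down-steps $(x,y)\to(x+1,y-1)$ (for $y\geq 1$), and the flat steps $(x,0)\to(x+2,0)$, after deleting those up- and down-edges that cross a barrier of $\Ba(a_1,\dotsc,a_m)$. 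A short coordinate computation should show that a unit up-step crosses a barrier exactly when it issues from a point on one of the lines $y=x+a_i$, that a unit down-step crosses a barrier exactly when it issues from a point on one of the lines $y=x+a_i+1$, and that a flat step---sitting at height $0$, while the barriers sit at half-integer heights and each $a_i\geq 0$---never meets a barrier. Hence the directed paths from $A_i$ to $B_j$ in $\mathcal{G}$ are precisely the Schr\"{o}der paths from $A_i$ to $B_j$ compatible with $\Ba(a_1,\dotsc,a_m)$, and there are $r_{i,j}$ of them. For the second identity I would pass to the subgraph $\mathcal{G}'$ obtained by deleting every remaining flat edge $(x,0)\to(x+2,0)$ with $x\notin\{-a_i-1:1\leq i\leq m\}$; then the directed paths from $A_i$ to $B_j$ in $\mathcal{G}'$ are exactly the compatible Schr\"{o}der paths from $A_i$ to $B_j$ with no bad flat step, of which there are $s_{i,j}$. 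Both $\mathcal{G}$ and $\mathcal{G}'$ are acyclic, since the $x$-coordinate strictly increases along every edge; note also that the common parity ($x+y$ odd) of all vertices guarantees that two of these paths cross in the plane if and only if they share a vertex, so that ``non-intersecting'' really means vertex-disjoint here.

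Next I would verify that the source tuple $U:=(A_1,\dotsc,A_n)$ is compatible with the sink tuple $V:=(B_1,\dotsc,B_n)$ in the sense defined above---in $\mathcal{G}$, and verbatim in $\mathcal{G}'$. Here $A_i=(x_i,0)$ with $x_1>x_2>\dotsc>x_n$, all negative, while $B_j=(2j-1,0)$ with $B_1<B_2<\dotsc<B_n$, all positive. For indices $a<b$ and $c>d$ the four points therefore occur on the $x$-axis, from left to right, in the order $x_b<x_a<2d-1<2c-1$. Since every Schr\"{o}der path is an arc in the closed upper half-plane with both endpoints on the $x$-axis, an arc from $A_a$ to $B_c$ and an arc from $A_b$ to $B_d$ have interleaving endpoints on the boundary line and so must intersect; this is exactly the compatibility condition. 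The same interleaving remark, applied to an arbitrary non-identity permutation $\sigma$ through a pair $a<b$ with $\sigma(a)>\sigma(b)$, also shows that every non-intersecting family under consideration necessarily joins $A_i$ to $B_i$, so that the $n$-tuples counted by Lemma \ref{Lin} are precisely those of $\Pi_n$ (resp.\ $\Lambda_n$).

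With these ingredients, Lemma \ref{Lin} applied to $\mathcal{G}$ gives $|\Pi_n(a_1,\dotsc,a_m)|=\det((r_{i,j})_{1\leq i,j\leq n})=\det(H_n(a_1,\dotsc,a_m))$, and applied to $\mathcal{G}'$ it gives $|\Lambda_n(a_1,\dotsc,a_m)|=\det((s_{i,j})_{1\leq i,j\leq n})=\det(G_n(a_1,\dotsc,a_m))$, which is the claim. I expect the main (and really the only nontrivial) step to be the first one: pinning down exactly which up- and down-steps are killed by the barriers of $\Ba(a_1,\dotsc,a_m)$ and confirming that flat steps are irrelevant for barrier-compatibility---so that the whole subtlety of bad flat steps is isolated in the passage from $\mathcal{G}$ to $\mathcal{G}'$. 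The rest is a direct application of the Lindstr\"{o}m--Gessel--Viennot lemma together with the elementary fact that two arcs in a half-plane with interleaving endpoints on the boundary must cross.
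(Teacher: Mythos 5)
Your proposal is correct and follows essentially the same route as the paper: both build a digraph on the relevant lattice points with up, down, and flat edges, delete the edges that cross barriers (and, for the second identity, additionally delete the bad flat edges), and then apply the Lindstr\"{o}m--Gessel--Viennot lemma to the compatible source and sink tuples $(A_1,\dotsc,A_n)$ and $(B_1,\dotsc,B_n)$. Your additional verifications (which steps a barrier kills, the interleaving argument for compatibility, and the parity remark identifying geometric crossings with shared vertices) are correct details that the paper leaves implicit.
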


\begin{proof}
 Consider two sets of points: $\mathcal{A}=\{A_1,\dotsc,A_n\}$ and $\mathcal{B}=\{B_1,\dots,B_n\}$, where $A_i$'s and  $B_j$'s are defined as in the paragraph before the statement of the theorem.

\begin{figure}\centering
\includegraphics[width=12cm]{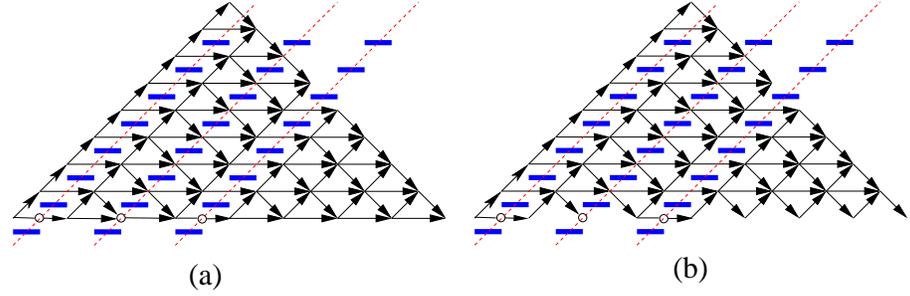}
\caption{Illustrating the proof of Proposition \ref{prop2}.}
\label{Graphschroder}
\end{figure}

Let $G$ be the digraph defined as follows. The vertex set of $G$ consists of all lattice points of the square lattice that are inside or on the edges of the up-pointing
 isosceles right triangle whose hypothenuse is segment $A_nB_n$, and that can be reached from $A_n$ by $(1,1)$, $(1,-1)$ and $(2,0)$ steps.
 An edge of $G$ connects from $(x,y)$ to $(x',y')$  if we can go from the former vertex to the latter vertex
by one of the above steps. Next, we remove all edges which cross some barriers of $\Ba(a_1,\dots,a_m)$
 (see the illustrative picture in Figure \ref{Graphschroder}(a), for $m=3$, $n=4$, $a_1=2$, $a_2=5$, and $a_3=8$).

Apply the Lemma \ref{Lin} to the digraph $G$ with two compatible sets of vertices $\mathcal{A}$ and $\mathcal{B}$, we get
\begin{equation}\label{GVeq}
\det (H_n)=|\Pi_n(a_1,\dotsc,a_m)|,
\end{equation}
which proves (\ref{prop2eq1}).

The equality (\ref{prop2eq2}) can be proved similarly. We apply the same procedure as in the proof of (\ref{prop2eq1}) to the digraph $G'$ that is obtained from the graph $G$
 by removing all horizontal edges on $x$-axis containing no points $(-a_i,0)$, for $1\leq i\leq m$ (see Figure \ref{Graphschroder}(b)).
\end{proof}

\begin{figure}\centering
\includegraphics[width=10cm]{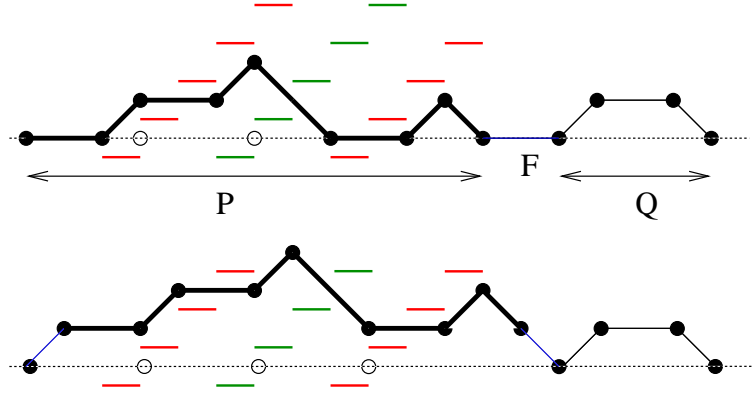}
\caption{A bijection between two types of Schr\"{o}der paths with barriers.}
\label{schrofig2}
\end{figure}

Similar to the relationship between large and small Schr\"{o}der numbers (see \cite{Eu} and \cite{Gessel}), we have the following fact about $r_{i,j}$ and $s_{i,j}$.
\begin{proposition}\label{prop3}
Given a setup of barriers $\Ba(a_1,\dotsc,a_m)$. If $a_1\not=0$, then $r_{i,j}=2s_{i,j}$, for any $1\leq i,j\leq n$.
\end{proposition}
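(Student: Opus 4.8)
The plan is to set up a bijection between Schröder paths from $A_i$ to $B_j$ (compatible with $\Ba(a_1,\dotsc,a_m)$) and \emph{pairs} consisting of a bit together with a Schröder path from $A_i$ to $B_j$ \emph{without} bad flat steps. Equivalently, I want to show that each path without bad flat steps is the image of exactly two compatible paths. This is the combinatorial analogue of the classical $2$-to-$1$ map between large and small Schröder paths, in which one marks each flat step on the $x$-axis in one of two ways; the only subtlety here is that \emph{good} flat steps (those from $(-a_i-1,0)$ to $(-a_i+1,0)$) must be treated as already ``marked'', so only bad flat steps carry a genuine binary choice.

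First I would record the elementary fact that forbidding bad flat steps still leaves at least one path from $A_i$ to $B_j$ whenever $r_{i,j}>0$: one can always push a flat step on the axis upward into an $\U\D$ peak, and doing this repeatedly removes all bad flat steps. Next, given a Schröder path $\pi$ without bad flat steps and a choice of ``$+$'' or ``$-$'', I would define the image path by the standard local replacement: scan $\pi$ from left to right; at the first $\U$ step whose matching $\D$ returns to the $x$-axis and such that the sub-path strictly between them stays at height $0$ — i.e., the first ``ground-touching arch of the form $\U\F^{t}\D$'' — if the sign is ``$-$'' replace that $\U\F^{t}\D$ by $\F^{t+1}$, and if the sign is ``$+$'' leave $\pi$ unchanged. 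To see this is well defined and $2$-to-$1$, I would instead run it in reverse: given a compatible path $\sigma$, look at its leftmost \emph{bad} flat step; if there is none, $\sigma$ is already bad-flat-free and is the ``$+$''-image of itself; if there is one, from $(x,0)$ to $(x+2,0)$ with $x\notin\{-a_i-1\}$, replace that $\F$ by $\U\D$, which strictly decreases the number of bad flat steps, and iterate until none remain, obtaining a canonical bad-flat-free path together with the location data that reconstructs $\sigma$. The hypothesis $a_1\neq 0$ enters precisely here: it guarantees that the point $(-1,0)$ (and more generally the first available odd start point) is not of the form $(-a_i,0)$, so that the barrier structure near the origin does not obstruct the peak-insertion/collapse move, and no flat step that the bijection manipulates is forced by a barrier to behave exceptionally.

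The key steps, in order: (1) reduce to a \emph{single} pair $(i,j)$, since $r_{i,j}=2s_{i,j}$ for all $i,j$ is what is claimed; (2) define the peak-collapse map $\Phi$ sending a compatible path to a bad-flat-free path by repeatedly replacing the leftmost bad flat step $\F$ by $\U\D$, and check that $\Phi$ is compatible with $\Ba(a_1,\dotsc,a_m)$ (the inserted $\U\D$ peak never crosses a barrier, using $a_1\neq 0$ to handle the leftmost region and the fact that barriers lie along $y=x+a_i$ for the rest); (3) check $\Phi$ terminates and lands in a bad-flat-free path, and that $\Phi$ restricted to bad-flat-free paths is the identity; (4) define the fiber data: a compatible path $\sigma$ with $\Phi(\sigma)=\lambda$ is determined by $\lambda$ together with the single bit ``is $\sigma=\lambda$ or not'' — and here I must argue that reversing $\Phi$ from $\lambda$ yields \emph{exactly} one non-trivial preimage, namely the one obtained by collapsing $\lambda$'s leftmost ground-touching arch $\U\F^t\D$ to $\F^{t+1}$, which is legal (stays at height $\geq 0$, crosses no barrier) iff $a_1\neq 0$; (5) conclude $|\Phi^{-1}(\lambda)|=2$ for every bad-flat-free $\lambda$, hence $r_{i,j}=2s_{i,j}$.

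The main obstacle I anticipate is step (4): showing the fiber has size exactly $2$, i.e. that a bad-flat-free path $\lambda$ has \emph{precisely one} compatible preimage other than itself. The worry is that $\lambda$ might have no ground-touching arch at all (so it cannot be ``un-collapsed''), or several, or that the natural un-collapse move produces a path that violates a barrier. I would resolve the first possibility by noting that $\lambda$ goes from $A_i=(x_i,0)$ to $B_j=(2j-1,0)$ with $x_i<0\le 2j-1$, so $\lambda$ must cross every vertical line, and in particular between the last axis-point with negative $x$-coordinate and the first with nonnegative one there is forced structure; more carefully, the reverse of $\Phi$ should simply be: take $\lambda$, find its first flat step or first peak that touches the axis, and that is the unique site — I will need to verify this matches $\Phi$ exactly. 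The barrier-compatibility of the un-collapse move is where $a_1\neq 0$ is indispensable: if $a_1=0$ there is a barrier along $y=x$ sitting directly above the origin region, which can block the peak that the bijection needs to create, breaking the $2$-to-$1$ correspondence — so I expect the clean statement to be exactly as given, with the hypothesis doing real work.
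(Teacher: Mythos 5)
Your strategy---a $2$-to-$1$ correspondence realized by \emph{local} exchanges between bad flat steps and ground-level peaks---does not work, and the obstacle you flag in step (4) is fatal rather than technical. The map $\Phi$ you define (repeatedly replace the leftmost bad flat step by $\U\D$ until none remain) ends up replacing \emph{every} bad flat step of $\sigma$ by a peak, so its fiber over a bad-flat-free path $\lambda$ consists of all paths obtained from $\lambda$ by collapsing an arbitrary subset of its collapsible ground-level $\U\D$ peaks; these fibers have size $2^{k}$ with $k$ varying, not size $2$. Concretely, take $m=1$, $a_1=2$, and paths from $A_1=(-1,0)$ to $B_2=(3,0)$: the six compatible paths are $\F\F$, $\F\U\D$, $\U\D\F$, $\U\D\U\D$, $\U\F\D$, $\U\U\D\D$, of which the last three are bad-flat-free (every level-$0$ flat step here starts at $-1$ or $1$, neither equal to $-a_1-1=-3$, so all are bad). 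Your $\Phi$ sends each of $\F\F$, $\F\U\D$, $\U\D\F$, $\U\D\U\D$ to $\U\D\U\D$ (a fiber of size $4$), while $\U\F\D$ and $\U\U\D\D$ are fixed with fibers of size $1$. The forward ``$-$'' move fares no better: $\U\U\D\D$ contains no ground-touching arch of the form $\U\F^{t}\D$, so its ``$-$''-image is undefined, and no choice of sign ever produces $\U\D\F$ (collapsing the \emph{first} arch of $\U\D\U\D$ gives $\F\U\D$, not $\U\D\F$). The totals $6=2\cdot 3$ do agree, but that equality is precisely the proposition; it cannot be certified by a map whose fibers are not uniformly of size $2$. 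Your preliminary reductions and your reading of where $a_1\neq 0$ matters are reasonable, but the core bijection is missing.

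The paper instead splits the compatible paths from $A_i$ to $B_j$ into the set $S$ of paths with \emph{at least one} bad flat step and the set $S'$ of paths with none, and exhibits a bijection $S\leftrightarrow S'$, giving $r_{i,j}=|S|+|S'|=2|S'|=2s_{i,j}$. The map is global, not local: write $\tau=P\F Q$ where $\F$ is the \emph{last} bad flat step (so $Q$ is bad-flat-free), and send $\tau\mapsto\U P\D Q$, lifting the entire prefix $P$---an arbitrary path segment, not merely a run of flat steps---by one unit between a new initial up step and a new down step. The inverse decomposes $\lambda\in S'$ at its first return to the $x$-axis as $\U P\D Q$ and restores $P\F Q$. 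Translating $P$ by $(1,1)$ preserves compatibility because all barriers lie along lines of slope $1$; the new up step misses every barrier because $x_i\notin\{-a_1,\dotsc,-a_m\}$; the new down step misses them exactly because the deleted flat step was bad; and $a_1\neq 0$ is what guarantees that the flat step recovered in the inverse direction is bad. Note that this construction handles both of your worries at once: a path like $\U\U\D\D$, which has no flat-only arch, still decomposes as $\U P\D$ with $P=\U\D$. If you want to repair your argument, the fix is to replace peak-collapse by this first-return/last-bad-flat-step surgery, at which point you have reproduced the paper's proof.
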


\begin{proof}
It is easy to see $r_{1,1}=2=2s_{1,1}$. Thus, we assume in the rest of the proof that $i+j\geq 3$.

Fix two indices $i$ and $j$, so that $i+j\geq 3$. We consider the following two subsets of the set of all Schr\"{o}der paths from $A_i$ to $B_j$,
which are compatible with $\Ba(a_1,\dotsc,a_m)$:
\begin{enumerate}
\item[(i)] The set $S$ of the paths having at least one bad flat
step;
 \item[(ii)] The set $S'$ of the paths  having no bad flat steps.
 \end{enumerate}
 We have a bijection between $S$ and $S'$ working as follows.

Let $\tau$ be a Schr\"{o}der path in  $S$. We can factor $\tau=P\F Q$, where $\F$ is the
last bad flat step in $\tau$, so $Q$ has no bad flat steps (see the upper picture in Figure \ref{schrofig2}).
We define a Schr\"{o}der path $\lambda= \U P\D Q$ (see the lower picture in Figure \ref{schrofig2}).
 One readily sees that  $\lambda$ is compatible with the  setup of barriers $\Ba(a_1,\dotsc,a_m)$, and has no bad flat steps.
 It means $\lambda \in S'$. Since $\lambda$ is determined uniquely by $\tau$, this gives an injection from $S$ to $S'$.

On the other hand, let $\lambda$ be a Schr\"{o}der path in $S'$, and let $A^*=(c,0)$ the first returning point of $\lambda$ to $x$-axis.
We factor $\lambda=\overline{\lambda} Q$, where $\overline{\lambda}$ is the portion of $\lambda$ connecting $A_i$ and $A^*$. 
We can factor further $\overline{\lambda}=\U P \D$ by the choice of $A^*$. Next, we define a Schr\"{o}der path $\tau=P \F Q$.
We have the number $c-2$ is not in the set$\{-a_i-1: 1\leq i\leq m\}$ (otherwise the last step of $\overline{\lambda}$, which is a down step,
crosses a barrier, a contradiction). Thus, by definition, the flat step $\F$, from $(c-2,0)$ to $(c,0)$, in the factorization of $\tau$ is a bad flat step.
Moreover, $\tau$ is compatible with $\Ba(a_1,\dotsc,a_m)$, so $\tau \in S$.
 Since $\tau$ is determined uniquely by $\lambda$, this yields an injection from $S'$ to $S$.

Therefore, we have a bijection between $S$ and $S'$, which completes the proof of the lemma.
\end{proof}

\begin{proposition}\label{prop4}
For any positive integers $n,m$, and for any nonnegative integers  $a_1,$ $a_2,$ $\dotsc,$ $a_m$ so that $a_1<a_2-1<\dotsc<a_m-1$

(a) $ |\Pi_n(a_1,\dotsc,a_m)|=|\Lambda_{n+1}(a_1+2,\dotsc,a_m+2)|.$

(b) $|\Lambda_{n}(1,a_2\dotsc,a_m)|=|\Pi_{n}(a_2-2,\dotsc,a_m-2)|$  if $a_1=1$.

(c) $ |\Pi_n(0,a_2\dotsc,a_m)|=|\Pi_{n-1}(a_2-2\dotsc,a_m-2)|$ if $a_1=0$.

\end{proposition}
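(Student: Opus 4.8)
The plan is to prove all three identities by explicit bijections between tuples of non-intersecting Schr\"{o}der paths, built from a few elementary observations together with the ``bad flat step'' move of Proposition~\ref{prop3}. Throughout I write $A_i(\mathbf b)$ for the $i$-th source attached to a barrier set $\Ba(\mathbf b)=\Ba(b_1,\dots,b_m)$ and $\mathbf a+2$ for $(a_1+2,\dots,a_m+2)$; the sinks $B_j=(2j-1,0)$ do not depend on $\mathbf b$, and passing from $n$ to $n+1$ inserts a new leftmost sink $(1,0)$ while moving the others right by $2$.

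Three ingredients do the work. (i) \emph{Endpoint dictionary.} A short count with negative odd integers gives, for part (a), $\{A_i(\mathbf a+2)\}=\{(-1,0)\}\cup\{A_i(\mathbf a)+(-2,0)\}$, and similar (somewhat more delicate) dictionaries comparing the sources of $\Ba(0,a_2,\dots,a_m)$ or $\Ba(1,a_2,\dots,a_m)$ with those of $\Ba(a_2-2,\dots,a_m-2)$; these pin down which paths a bijection must produce. (ii) \emph{Two translations between $\Ba(\mathbf b)$ and $\Ba(\mathbf b+2)$.} Translating a path by $(-1,+1)$ raises every value $y-x$ by $2$, so it carries $\Ba(\mathbf b)$-compatible paths to $\Ba(\mathbf b+2)$-compatible ones and turns every flat step into one of positive height, destroying all bad flat steps; translating by $(-2,0)$ also carries $\Ba(\mathbf b)$-compatibility to $\Ba(\mathbf b+2)$-compatibility but instead preserves the good/bad status of every flat step (a good low flat step over $(-b_\ell,0)$ goes to one over $(-b_\ell-2,0)$). (iii) \emph{Forced innermost path.} In $\Lambda_{n+1}(\mathbf a+2)$ the innermost path joins $(-1,0)$ to $(1,0)$ and has no bad flat step, hence is the wedge $\U\D$; in $\Pi_n(0,a_2,\dots,a_m)$ the barrier along $y=x$ blocks every down-step crossing that line, so each path crosses $y=x$ by a flat step, and the innermost path, of semilength one, can only use the good low flat step over the origin, hence equals $\F$. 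Observation (iii) peels one path off the tuple.

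With these in hand: for (a) I would send $(\pi_1,\dots,\pi_n)$ to $(\U\D,\lambda_2,\dots,\lambda_{n+1})$, where $\lambda_{i+1}$ is obtained from $\pi_i$ by applying the Proposition~\ref{prop3} move at its last bad flat step (removing all bad flat steps), then translating by $(-2,0)$ (transporting the remaining good low flat steps and fixing the left endpoint), and finally prepending $\U$ and appending $\F\D$ (fixing the right endpoint and raising the semilength by $2$). One checks that no step so produced crosses a barrier of $\Ba(\mathbf a+2)$ — the new first and last steps automatically, from the endpoint arithmetic, and the middle by ingredient~(ii) — and that the family remains non-intersecting, because the recipe is applied simultaneously to the nested paths and preserves their relative position; the inverse is read off from where $\lambda_{i+1}$ meets the $x$-axis and whether those meetings occur at good positions. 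For (c) one peels off the forced innermost path $\F$ and checks that the remaining $n-1$ paths, after the appropriate translation and re-indexing, form a configuration compatible with $\Ba(a_2-2,\dots,a_m-2)$ (here nothing need be verified about flat steps, since $\Pi$ allows all of them). For (b) the hypothesis $a_1=1$ forces every path in $\Lambda_n(1,a_2,\dots,a_m)$ to cross $y=x+1$ in a prescribed fashion, and undoing that crossing — again a translation plus re-indexing — yields a configuration compatible with $\Ba(a_2-2,\dots,a_m-2)$ with no constraint left on the line $y=x+1$; the absence of bad flat steps on the new side is inherited. In all three cases the remaining verifications (compatibility, flat-step status, non-intersection) are routine.

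I expect part (a) to be the crux. The naive attempt — translate each $\pi_i$ by $(-1,+1)$ and wrap it in $\U\cdots\D$ — is not surjective, since a path in $\Lambda_{n+1}(\mathbf a+2)$ may return to the $x$-axis, either at a good low flat step or at a valley, and no single lift produces such a path. The real work is to organise the bijection so that the good low flat steps (governed by the $(-2,0)$-translation) and the axis valleys (governed by the bad-flat-step move applied arch by arch) are matched correctly on both sides, while verifying that carrying all of this out in parallel across the nested family does not destroy the non-intersecting property.
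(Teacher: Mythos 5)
Your recipe for part (a) -- which you correctly identify as the crux -- does not work, for two concrete reasons. First, the ``de-badding'' move of Proposition~\ref{prop3} is a bijection from the set $S$ of paths \emph{with} a bad flat step onto the set $S'$ of paths \emph{without} one; applied to all of $S\cup S'$ (doing nothing on $S'$) it is therefore $2$-to-$1$, and nothing in your subsequent translation-and-wrapping steps restores the lost bit. For example, in $\Pi_1(1)$ the four admissible paths from $(-3,0)$ to $(1,0)$ are $\U\U\D\D$, $\U\F\D$, $\U\D\F$, $\F\F$; de-badding collapses them to the two paths $\U\U\D\D$ and $\U\F\D$, so your composite map has at most two images, while $|\Lambda_2(3)|=4$. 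Second, appending $\F\D$ places a flat step at height $1$ from $(2i-2,1)$ to $(2i,1)$ in $\lambda_{i+1}$, but $(2i-2,1)$ is exactly the penultimate lattice point of $\lambda_i$ (which ends with a down step into $B_i=(2i-1,0)$), so consecutive paths in your image share a vertex and the tuple is not non-intersecting. The worry that motivated these modifications -- that a simple wrapping map cannot be surjective because a path of $\Lambda_{n+1}(a_1+2,\dotsc,a_m+2)$ may return to the $x$-axis -- is unfounded: an interior return at $(c,0)$ is impossible, since $c$ is either a source or sink of another path (excluded by non-intersection or parity), or $c=-a_l-2$, from which the up step is barred by the barrier on $y=x+a_l+2$ and the flat step is bad because the gap condition $a_{l-1}<a_l-1$ rules out $c=-a_{l'}-3$; the same gap condition kills all good axis flat steps, whose right endpoint $(-a_l-1,0)$ is another path's source or has the wrong parity. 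Hence each $\lambda_{i+1}$ is forced to be a single arch, and the paper's direct map -- $\lambda_1=\U\D$ and $\lambda_{i+1}$ obtained by raising $\pi_i$ two units and closing it off with two up steps in front and two down steps behind (this is what the figure shows; the exponent $(i-1)$ in the paper's displayed formula is a typo, as the endpoint arithmetic $A_{i+1}'=A_i-(2,0)$, $B_{i+1}=B_i+(2,0)$ makes clear) -- is already a bijection, with no de-badding needed.

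Your sketches of (b) and (c) are consistent with the paper's proofs (wrap/unwrap a single arch over the barrier line $y=x+1$ for (b); peel off the forced innermost path $\F$ for (c)), and your ingredient~(iii) for the forcing of $\F$ in (c) matches the paper's reasoning. But note that in (c) the surviving paths are not recovered by ``a translation plus re-indexing'': the paper's map is $\pi_i'=\U\widetilde{\pi}_{i-1}\F\D^{(i-1)}$, where one must first argue that the last $i-1$ steps of each $\pi_i$ are forced down steps and that each image path crosses the barrier line $y=x$ by a single forced flat step at height $i-1$; these verifications, which you defer as routine, are where the actual content of (b) and (c) lies.
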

\begin{proof}
(a) We have a bijection $\varphi$ between two sets $\Pi_n(a_1,\dotsc,a_m)$ and $\Lambda_{n+1}(a_1+2,\dotsc,a_m+2)$
defined as follows. $\varphi$ carries $(\pi_1,\dotsc,\pi_n)\in \Pi_{n}(a_1,\dotsc,a_m)$ into $(\lambda_1,\dotsc,\lambda_{n+1})\in\Lambda_{n+1}(a_1+2,\dotsc,a_m+2)$,
where $\lambda_1=\U\D$ and $\lambda_{i+1}=\U^{(i-1)}\pi_i\D^{(i-1)}$, for $1\leq i\leq n$. This bijection is illustrated in Figure \ref{bijection3}.

\begin{figure}\centering
\resizebox{!}{3cm}{
\begin{picture}(0,0)%
\includegraphics{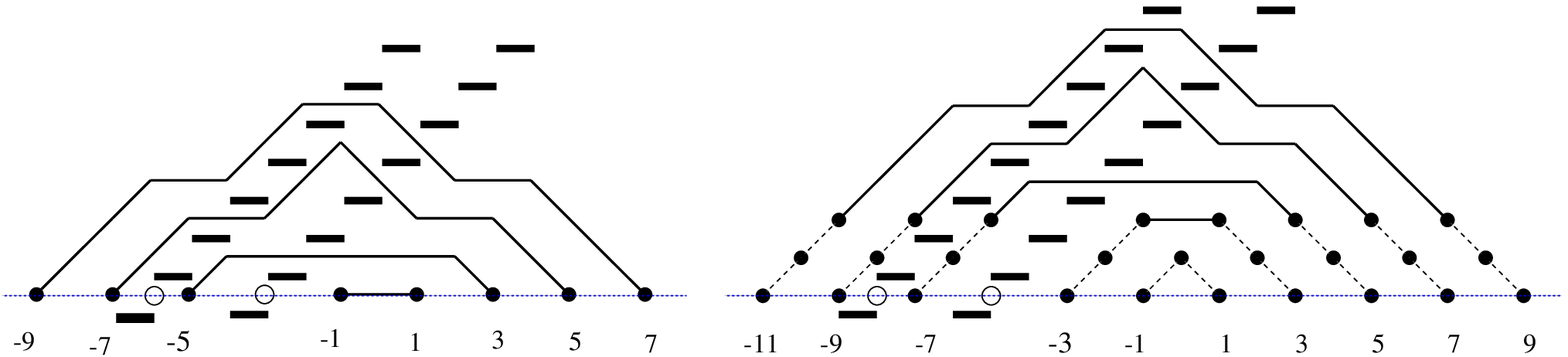}%
\end{picture}%
\setlength{\unitlength}{3947sp}%
\begingroup\makeatletter\ifx\SetFigFont\undefined%
\gdef\SetFigFont#1#2#3#4#5{%
  \reset@font\fontsize{#1}{#2pt}%
  \fontfamily{#3}\fontseries{#4}\fontshape{#5}%
  \selectfont}%
\fi\endgroup%
\begin{picture}(9710,2197)(2115,-2849)
\put(4768,-2594){\makebox(0,0)[lb]{\smash{{\SetFigFont{12}{12.0}{\rmdefault}{\mddefault}{\updefault}{$\pi_1$}%
}}}}
\put(5135,-2294){\makebox(0,0)[lb]{\smash{{\SetFigFont{12}{12.0}{\rmdefault}{\mddefault}{\updefault}{$\pi_2$}%
}}}}
\put(5353,-2092){\makebox(0,0)[lb]{\smash{{\SetFigFont{12}{12.0}{\rmdefault}{\mddefault}{\updefault}{$\pi_3$}%
}}}}
\put(5593,-1799){\makebox(0,0)[lb]{\smash{{\SetFigFont{12}{12.0}{\rmdefault}{\mddefault}{\updefault}{$\pi_4$}%
}}}}
\put(9164,-2266){\makebox(0,0)[lb]{\smash{{\SetFigFont{12}{12.0}{\rmdefault}{\mddefault}{\updefault}{$\lambda_1$}%
}}}}
\put(9804,-2055){\makebox(0,0)[lb]{\smash{{\SetFigFont{12}{12.0}{\rmdefault}{\mddefault}{\updefault}{$\lambda_2$}%
}}}}
\put(10040,-1819){\makebox(0,0)[lb]{\smash{{\SetFigFont{12}{12.0}{\rmdefault}{\mddefault}{\updefault}{$\lambda_3$}%
}}}}
\put(10335,-1583){\makebox(0,0)[lb]{\smash{{\SetFigFont{12}{12.0}{\rmdefault}{\mddefault}{\updefault}{$\lambda_4$}%
}}}}
\put(10630,-1406){\makebox(0,0)[lb]{\smash{{\SetFigFont{12}{12.0}{\rmdefault}{\mddefault}{\updefault}{$\lambda_5$}%
}}}}
\end{picture}}
\caption{Illustrating the proof of Lemma \ref{prop4}(a), for  $m=2$, $n=4$, $a_1=3$, and $a_2=6$.}
\label{bijection3}
\end{figure}

\medskip

(b) There is also a bijection $\psi$ between $\Pi_{n}(a_2-2\dotsc,a_m-2)$ and $\Lambda_{n}(1,a_2\dotsc,a_m)$  by  setting
\[\psi((\pi_1,\dotsc,\pi_n))=(\lambda_1,\dotsc,\lambda_{n})\in\Lambda_{n}(1,a_1,\dotsc,a_m),\]
where $\lambda_i:=\U\pi_i\D$, for $1\leq i\leq n$. This bijection is illustrated in Figure \ref{bijection7}.

\begin{figure}\centering
\includegraphics[width=13cm]{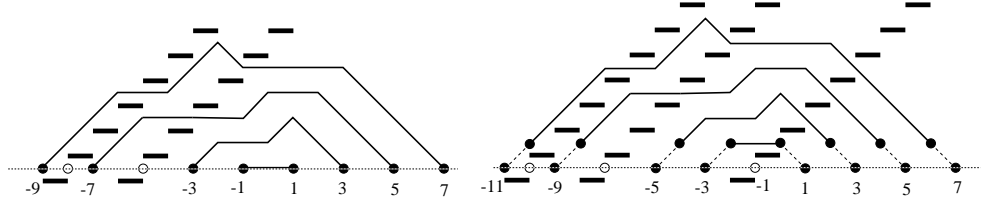}
\caption{Illustrating the proof of Lemma \ref{prop4}(b), for  $m=3$, $n=4$, $a_1=1$,  $a_2=7$ and $a_3=10$.}
\label{bijection7}
\end{figure}
\medskip

(c) We construct a bijection $\phi$ between two sets $\Pi_n(0,a_2\dotsc,a_m)$ and $\Pi_{n-1}(a_2-2\dotsc,a_m-2)$, for $n\geq2$, as follows.
 Let $(\pi_1,\dotsc,\pi_{n-1})$ be an element of $\Pi_{n-1}(a_2-2\dotsc,a_m-2)$.

It is easy to see that the last $i-1$ steps of $\pi_i$ are down steps, for $2\leq i\leq n-1$.
Thus, we can factor $\pi_i:=\widetilde{\pi}_i\D^{(i-1)}$, for $2\leq i\leq n-1$. Let $\pi'_1:=\F$, $\pi'_2:=\U\pi_1\F\D$ and $\pi'_i:=\U\widetilde{\pi}_{i-1}\F\D^{(i-1)}$, for $2\leq i\leq n$
 (see Figure \ref{bijection6}).  Define $\phi$ by setting
 \[\phi((\pi_1,\dotsc,\pi_{n-1})):=(\pi'_1,\dotsc, \pi'_n).\]
\end{proof}

\begin{figure}\centering
\includegraphics[width=13cm]{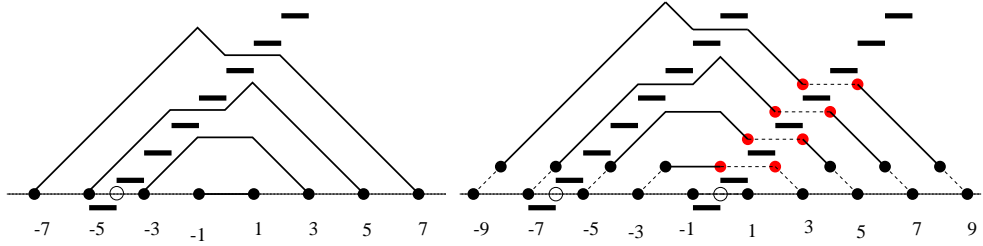}
\caption{Illustrating the proof of Lemma \ref{prop4}(c), for  $m=2$, $n=5$, $a_1=0$, and $a_2=6$.}
\label{bijection6}
\end{figure}

\section{Proof of Theorem \ref{gendoug}}

Before presenting the proof of Theorem \ref{gendoug}, we prove an important fact stated in the next proposition.

\begin{proposition}\label{prop1}
Assume that $a,d_1,\dotsc,d_k$ are positive integers so that the generalized Douglas region $D_{a}(d_1,d_2,\dotsc,d_k)$ has the width $w$, and has
 its western and eastern vertices on the same horizontal line. Let $a_i:=d_k+\dotsc+d_{k-i+1}+i-1$, for $i=1,2,\dotsc,k-1$.
Then
\begin{equation}\label{prop1eq}
\M(D_{a}(d_1,\dotsc,d_k))=|\Pi_w(a_1,\dotsc,a_{k-1})|.
\end{equation}
\end{proposition}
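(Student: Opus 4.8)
The plan is to establish a bijection between the domino tilings of $D_a(d_1,\dotsc,d_k)$ and the $w$-tuples of non-intersecting Schr\"{o}der paths compatible with the barrier setup $\Ba(a_1,\dotsc,a_{k-1})$. The standard route, following the paper's announced strategy, is to pass through the dual graph: a tiling of the region is a perfect matching of the dual graph, and perfect matchings of such "brick-wall"-type graphs (coming from square-lattice regions with diagonals drawn in) are well known to correspond to families of non-intersecting lattice paths. So first I would fix the bijection between matchings and path families, then translate it into the language of Schr\"{o}der paths with barriers.

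Concretely, here are the steps I would carry out. First, describe the dual graph of $D_a(d_1,\dotsc,d_k)$ explicitly: its vertices are the cells (squares and triangles), and the structure results from the analysis in Section 2 --- there are $w$ white squares in the bottom row, $p$ rows of black squares, $q$ rows of black up-pointing triangles, $l$ rows of black down-pointing triangles, with $a=p+l$, $w=p+q$, $q+l=k-1$. Second, set up the path system: in each tiling, starting from each of the $w$ white squares in the bottom row of cells, trace a path through the tiling (each domino is crossed in a canonical direction) that terminates at one of $w$ distinguished cells determined by the northwestern boundary --- this is the usual "de Bruijn / domino-to-path" correspondence, and because the region's two vertical extremes $B$ and $D$ lie on a common horizontal line, the endpoints line up so that the $i$-th path runs from the $i$-th starting cell to the $i$-th ending cell (matching the $A_i\mapsto B_i$ pairing in the definition of $\Pi_w$). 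Third, read off the step types: a domino covering two squares, or two triangles forming a "lozenge," or a square-plus-triangle, each forces the path to take a diagonal up, diagonal down, or flat step, so the path becomes a Schr\"{o}der path after an affine change of coordinates sending the bottom row of cells to the $x$-axis. Fourth --- and this is the crucial bookkeeping --- show that the $k-1$ drawn-in diagonals become exactly the barriers: a path cannot cross a drawn-in diagonal except along a drawn-in segment where a lozenge sits, and the distances $d_k,d_{k-1},\dotsc$ accumulate (together with the $+i-1$ correction coming from the zigzag northwestern boundary) to place the $i$-th family of barriers along $y = x + a_i$ with $a_i = d_k+\dotsc+d_{k-i+1}+i-1$. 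Fifth, verify non-intersection: distinct paths cannot share a vertex because the dominoes are disjoint, and conversely any compatible non-intersecting $w$-tuple reassembles into a unique tiling. This two-way construction gives $|\,\mathrm{tilings}\,| = |\Pi_w(a_1,\dotsc,a_{k-1})|$.

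I expect the main obstacle to be step four: getting the barrier positions exactly right. One has to be careful about the coloring convention (so that the path step types are forced consistently along the whole region), about how the zigzag northwestern and southeastern boundaries shift the starting/ending lattice points relative to $\ell$ and $\ell'$ (this is where the "$+i-1$" and the ordering $d_k,\dotsc,d_{k-i+1}$ enter, i.e. the diagonals are counted from the bottom up), and about the edge cases where a drawn-in diagonal is adjacent to a boundary. A clean way to control this is to induct on $k$ or to process the region layer by layer (using the "layers" defined at the end of Section 2): within a single layer there are no barriers and the correspondence is the classical Aztec-diamond path bijection, and each drawn-in diagonal contributes precisely one barrier line whose offset from the previous one is the layer width $d_{k-i+1}$ plus the unit shift forced by the single row of black triangles resting on that diagonal. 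Secondary care is needed to confirm that $\mathcal{A}=\{A_i\}$ is compatible with $\mathcal{B}=\{B_i\}$ in the sense required so that the path endpoints genuinely pair up in order; this follows from planarity once the affine identification is fixed, but it should be stated. The remaining verifications --- that every step type is forced, that disjointness of dominoes is equivalent to non-intersection of paths, and that the map is invertible --- are routine planar-matching arguments and I would present them briefly with a figure rather than in full combinatorial detail.
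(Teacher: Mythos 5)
Your proposal takes essentially the same route as the paper: a bijection between tilings and $w$-tuples of non-intersecting Schr\"{o}der paths in which the drawn-in diagonals become the barriers of $\Ba(a_1,\dotsc,a_{k-1})$, the only presentational difference being that the paper first deforms the dual graph into a square-lattice region $\overline{D}_{a}(d_1,\dotsc,d_k)$ carrying barriers (so the step types are read off ordinary dominoes rather than mixed square/triangle tiles) and then normalizes the staggered path endpoints onto the $x$-axis by padding $\pi_i:=\U^{(i-1)}\tau_i\D^{(i-1)}$ --- the one piece of bookkeeping your sketch leaves implicit when asserting that the endpoints ``line up'' with the $A_i$ and $B_i$ of $\Pi_w$. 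The delicate points you flag (barrier placement and endpoint compatibility) are exactly the ones the paper's deformation and padding devices are designed to handle, so the plan is sound.
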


\begin{proof}
Consider a new region $\overline{\mathcal{D}}:=\overline{D}_{a}(d_1,\dotsc,d_k)$ associating with certain barriers as follows.
 We first deform the dual graph $G$ of $D_{a}(d_1,\dotsc,d_k)$ into a subgraph $G'$ of the infinite square grid $\mathbb{Z}^2$ (see Figure \ref{deform2}(a) for an example).
  Denote by $G''$ the subgraph of the square grid induced by the vertex set of $G'$. The region $\overline{\mathcal{D}}:=\overline{D}_{a}(d_1,\dotsc,d_k)$
   is the region in the square lattice having the dual graph (isomorphic to) $G''$. Consider the southwest-to-northeast lines passing the centers of a length-3
    horizontal step on the southwestern boundary or on the northeastern boundary of $\overline{\mathcal{D}}$.
    Draw the barriers at the positions of the horizontal lattice segments passed through by those lines (see Figure \ref{deform2}(b) for an example; the bold horizontal segments indicate the barriers).

\begin{figure}\centering
\includegraphics[width=12cm]{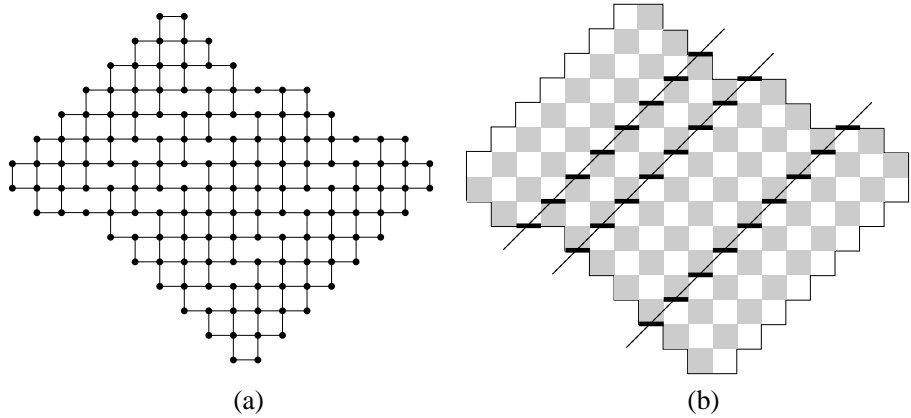}
\caption{The dual graph of $D_{7}(4,2,5,4)$ deformed into a subgraph of the square lattice (left), and region $\overline{D}_{7}(4,2,5,4)$ (right).}
\label{deform2}
\end{figure}

 A \textit{bad tile} of $\overline{\mathcal{D}}$ is a vertical domino whose center passed through by a barrier. A
  compatible tiling of $\overline{\mathcal{D}}$ is a tiling of $\overline{\mathcal{D}}$ which contains no bad tiles.  We have
\begin{equation}\label{deform}
\M(G')=\M^{*}(\overline{\mathcal{D}}),
\end{equation}
where $\M^{*}(\overline{\mathcal{D}})$ is the number of compatible tilings of $\overline{\mathcal{D}}$.
Indeed, the expression on the right of (\ref{deform}) is exactly the number of perfect matchings of the graph obtained from the dual graph $G''$ of $\overline{\mathcal{D}}$
by removing all the vertical edges corresponding to its bad tiles, i.e. the graph $G'$.

We have a bijection between the set of compatible tilings of $\overline{\mathcal{D}}$ and the set of $w$-tuples of non-intersecting
Schr\"{o}der paths $(\tau_1,\dotsc,\tau_w)$ compatible with the barriers of  $\overline{\mathcal{D}}$, where $\tau_i$ starts by the center of the $i$th vertical step (from bottom)
on the southwestern boundary, and ends by  the center of the $i$th vertical step on the southeastern boundary of $\overline{\mathcal{D}}$ (illustrated in Figure \ref{bijection2'}).  In particular the
bijection works as in the next paragraph.

It is easy to see that each tiling of $\overline{\mathcal{D}}$ gives a unique $w$-tuple of non-intersecting paths $(\tau_1,\dotsc,\tau_w)$.
 On the other hand, given a $w$-tuple of non-intersecting paths $(\tau_1,\dotsc,\tau_w)$, we can recover the corresponding tiling of the region as follows.
 The up and down steps in each path $\tau_i$ are covered by vertical dominos, and the flat steps are covered by horizontal dominos.
 After covering all steps of all paths $\tau_i$'s, we cover the rest of the region by horizontal dominos.

\begin{figure}\centering
\begin{picture}(0,0)%
\includegraphics{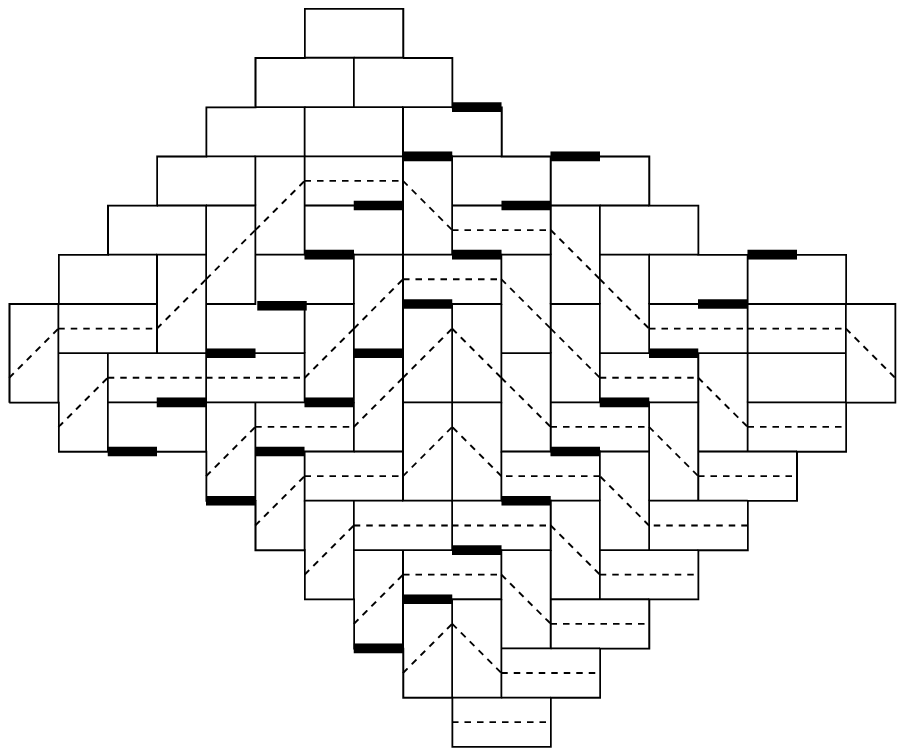}%
\end{picture}%
\setlength{\unitlength}{3947sp}%
\begingroup\makeatletter\ifx\SetFigFont\undefined%
\gdef\SetFigFont#1#2#3#4#5{%
  \reset@font\fontsize{#1}{#2pt}%
  \fontfamily{#3}\fontseries{#4}\fontshape{#5}%
  \selectfont}%
\fi\endgroup%
\begin{picture}(4507,3570)(222,-2948)
\put(2354,-2933){\makebox(0,0)[lb]{\smash{{\SetFigFont{10}{12.0}{\rmdefault}{\mddefault}{\updefault}{$\tau_1$}%
}}}}
\put(2127,-2704){\makebox(0,0)[lb]{\smash{{\SetFigFont{10}{12.0}{\rmdefault}{\mddefault}{\updefault}{$\tau_2$}%
}}}}
\put(1891,-2468){\makebox(0,0)[lb]{\smash{{\SetFigFont{10}{12.0}{\rmdefault}{\mddefault}{\updefault}{$\tau_3$}%
}}}}
\put(1654,-2232){\makebox(0,0)[lb]{\smash{{\SetFigFont{10}{12.0}{\rmdefault}{\mddefault}{\updefault}{$\tau_4$}%
}}}}
\put(1418,-1995){\makebox(0,0)[lb]{\smash{{\SetFigFont{10}{12.0}{\rmdefault}{\mddefault}{\updefault}{$\tau_5$}%
}}}}
\put(1182,-1759){\makebox(0,0)[lb]{\smash{{\SetFigFont{10}{12.0}{\rmdefault}{\mddefault}{\updefault}{$\tau_6$}%
}}}}
\put(473,-1523){\makebox(0,0)[lb]{\smash{{\SetFigFont{10}{12.0}{\rmdefault}{\mddefault}{\updefault}{$\tau_7$}%
}}}}
\put(237,-1287){\makebox(0,0)[lb]{\smash{{\SetFigFont{10}{12.0}{\rmdefault}{\mddefault}{\updefault}{$\tau_8$}%
}}}}
\end{picture}%
\caption{Bijection between domino tilings and non-intersecting paths}
\label{bijection2'}
\end{figure}

\begin{figure}\centering
\resizebox{!}{5cm}{
\begin{picture}(0,0)%
\includegraphics{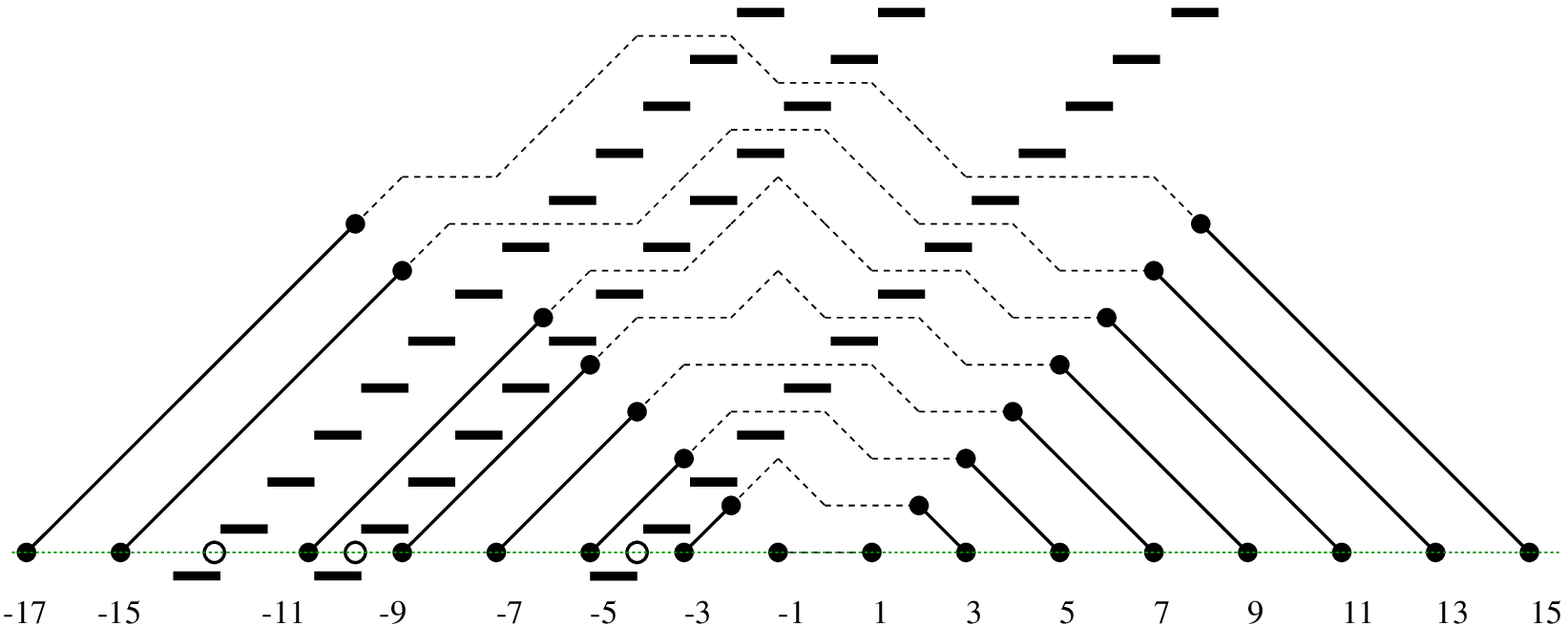}%
\end{picture}%
%
%
\setlength{\unitlength}{3947sp}%
\begingroup\makeatletter\ifx\SetFigFont\undefined%
\gdef\SetFigFont#1#2#3#4#5{%
  \reset@font\fontsize{#1}{#2pt}%
  \fontfamily{#3}\fontseries{#4}\fontshape{#5}%
  \selectfont}%
\fi\endgroup%
\begin{picture}(7899,3130)(428,-2956)
\put(4695,-2468){\makebox(0,0)[lb]{\smash{{\SetFigFont{10}{12.0}{\rmdefault}{\mddefault}{\updefault}{$\pi_1$}%
}}}}
\put(5286,-2468){\makebox(0,0)[lb]{\smash{{\SetFigFont{10}{12.0}{\rmdefault}{\mddefault}{\updefault}{$\pi_2$}%
}}}}
\put(5522,-2232){\makebox(0,0)[lb]{\smash{{\SetFigFont{10}{12.0}{\rmdefault}{\mddefault}{\updefault}{$\pi_3$}%
}}}}
\put(5876,-2114){\makebox(0,0)[lb]{\smash{{\SetFigFont{10}{12.0}{\rmdefault}{\mddefault}{\updefault}{$\pi_4$}%
}}}}
\put(6112,-1878){\makebox(0,0)[lb]{\smash{{\SetFigFont{10}{12.0}{\rmdefault}{\mddefault}{\updefault}{$\pi_5$}%
}}}}
\put(6467,-1760){\makebox(0,0)[lb]{\smash{{\SetFigFont{10}{12.0}{\rmdefault}{\mddefault}{\updefault}{$\pi_6$}%
}}}}
\put(6821,-1641){\makebox(0,0)[lb]{\smash{{\SetFigFont{10}{12.0}{\rmdefault}{\mddefault}{\updefault}{$\pi_7$}%
}}}}
\put(7293,-1523){\makebox(0,0)[lb]{\smash{{\SetFigFont{10}{12.0}{\rmdefault}{\mddefault}{\updefault}{$\pi_8$}%
}}}}
\end{picture}}
\caption{Bijection between $(\tau_1,\dotsc,\tau_8)$ in Figure \ref{bijection2'} and $(\pi_1,\dotsc,\pi_8)$}
\label{bijection3'}
\end{figure}

Next, we have a bijection between the set of $w$-tuples $(\tau_1,\dotsc,\tau_w)$ above and the set $\Pi_w(a_1,\dots,a_{k-1})$ of
$w$-tuples $(\pi_1,\dots,\pi_w)$ (shown in Figure \ref{bijection3'}).
Precisely, the Schr\"{o}der path $\pi_i$ is obtained from $\tau_i$ by adding $i-1$ up steps before its starting point, and adding $i-1$ down steps after its ending point,
i.e. $\pi_i:=\U^{(i-1)}\tau_i\D^{(i-1)}$, for $i=1,2,\dotsc,w$.

By the two above bijections and (\ref{deform}), we get (\ref{prop1eq}).
\end{proof}

We are now  ready to prove Theorem \ref{gendoug}.

\begin{proof}[\textbf{Proof of Theorem \ref{gendoug}}]
We prove (\ref{gendougeq}) by induction on the number of layers $k$ of the region.

For $k=1$, the region $D_{a}(d_1,\dotsc,d_k)$ is the Aztec diamond of order $a$, so (\ref{gendougeq}) follows from the Aztec diamond theorem \ref{Aztecthm}.

For the induction step, suppose (\ref{gendougeq}) holds for any generalized Douglas regions with strictly less than $k$ layers, for $k\geq 2$.
 We need to show that (\ref{gendougeq}) holds for any generalized Douglas region
$D_{a}(d_1,\dotsc,d_k)$.

Let  $a_i:=d_k+\dotsc+d_{k-i+1}+i-1$, for $i=1,2,\dotsc,k-1$, as in Proposition \ref{prop1}.
Recall that we denote by $p,q,l$ the numbers of rows of black squares, of black up-pointing triangles, and of black down-pointing triangles, respectively.

 There are two cases to distinguish, based on the parity of $d_k$.

\medskip

\quad\textit{ Case I. $d_k$ is even.}

Assume that $d_k=a_1=2x$, for some $x\geq 1$. The last layer of the region has $x$ rows of black square, so $p\geq x$; and the $(k-1)$th layer has a row of black up-pointing triangles with bases resting on
the last drawn-in diagonal, so $q\geq 1$. Thus,
$w=p+q\geq x+1$ (by (\ref{constraint2})).

By Proposition \ref{prop1}, we have
 \begin{equation}\label{eq1}
 \M(D_{a}(d_1,\dotsc,d_k))=|\Pi_w(a_1,\dotsc,a_{k-1})|.
 \end{equation}
By Propositions \ref{prop2} and \ref{prop3}, we obtain
 \begin{equation}\label{eq2}
 |\Pi_w(a_1,\dotsc,a_{k-1})|=\det(H_w)=2^w\det(G_w)=2^w|\Lambda_w(a_1,\dotsc,a_{k-1})|.
 \end{equation}
 We apply Proposition \ref{prop4}(a), and obtain
 \begin{equation}\label{eq3}
 |\Lambda_w(a_1,\dotsc,a_{k-1})|=|\Pi_{w-1}(a_1-2,\dotsc,a_{k-1}-2)|.
 \end{equation}
 Two equalities (\ref{eq2}) and (\ref{eq3}) imply
  \begin{equation}\label{eq4}
 |\Pi_w(a_1,\dotsc,a_{k-1})|=2^w|\Pi_{w-1}(a_1-2,\dotsc,a_{k-1}-2)|.
 \end{equation}
We apply (\ref{eq4})  $x$ times, obtain
   \begin{equation}\label{eq5}
 |\Pi_w(a_1,\dotsc,a_{k-1})|=2^{\sum_{i=0}^{x-1}(w-i)}|\Pi_{w-x}(0,a_2-a_1\dotsc,a_{k-1}-a_1)|.
 \end{equation}

By equality (\ref{constraint1}),  we have $a=p+l\geq p\geq x$. There are now two subcases to distinguish, depending on the value of $a$.

\medskip
\quad\textit{Case I.1. $a=x$.}

The equality (\ref{constraint1}) implies $p=x$ and $l=0$.  By (\ref{constraint5}), we have $q=k-1$; and by (\ref{constraint4}), we obtain
\[a+w=2p+n+m=2x+k-1=\sum_{i=1}^kd_i.\]
 Since $d_{k}=2x$, we have $d_1=d_2=\dotsc=d_{k-1}=1$ (see Figure \ref{schrodercase1} for an example of the generalized Douglas region in this case).
 Moreover, by (\ref{constraint2}),  we get $w=p+q=x+k-1$. It is easy to see that
\[|\Pi_{w-x}(0,a_2-a_1\dotsc,a_{k-1}-a_1)|=|\Pi_{k-1}(0,2,4,\dotsc,2(k-1))|=1,\]
so
\[\M(D_{a}(d_1,\dotsc,d_k))=2^{\sum_{i=0}^{x-1}(w-i)}.\]
One can verify that $\mathcal{C}=(w+1)x+\sum_{i=0}^{w-a-1}(w-i)$, then (\ref{gendougeq}) follows.

\begin{figure}\centering
\includegraphics[width=6cm]{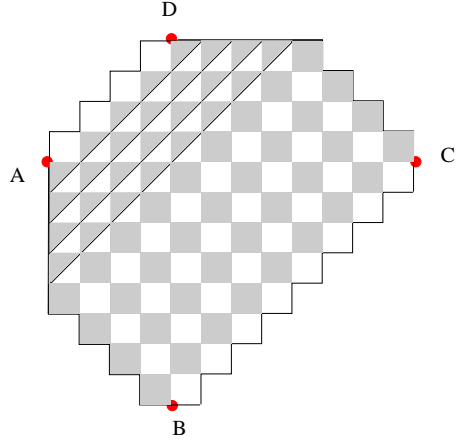}
\caption{Illustrating the region in Case I.1.}
\label{schrodercase1}
\end{figure}

\medskip

\quad\textit{Case I.2. $a>x$.}

\medskip
Then there is some $d_i>1$, for some $1\leq i\leq k-1$, so the $i$th layer has at least one row of black squares.
Since the last layer still have $x$ rows of black squares, we have $p\geq x+1$.  Since we already have $q\geq 1$ from the argument at the begining of Case I,  $w=p+q\geq x+2$.
By Proposition \ref{prop4}(c), we get
   \begin{equation}\label{eq6}
 |\Pi_w(a_1,\dotsc,a_{k-1})|=2^{\sum_{i=1}^{x-1}(w-i)}|\Pi_{w-x-1}(a_2-a_1-2\dotsc,a_{k-1}-a_1-2)|.
 \end{equation}

\begin{figure}\centering
\includegraphics[width=7cm]{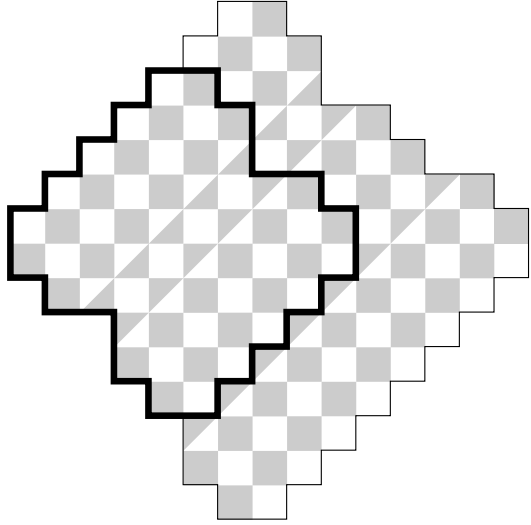}
\caption{Comparison of two regions $\mathcal{D}=D_{7}(4,2,5,4)$ and $\mathcal{D}'=D_{5}(4,2,4)$ (restricted by the bold contour).}
\label{compare3}
\end{figure}

Consider a new generalized Douglas region $\mathcal{D}':=D_{a-x}(d_1,\dotsc,d_{k-1}-1)$ having $k-1$ layers. Assume that $\mathcal{C}'$ is the
 number of  black regular cells in $\mathcal{D}'$, and $w'$ is the width of $\mathcal{D}'$. Intuitively, $\mathcal{D}'$ is obtained from $\mathcal{D}$ by removing its last layer and the row of
 black up-pointing triangles right above the last layer, and reducing the length of all remaining rows
of cells by $x$ units (see Figure \ref{compare3} for an example). Therefore, one can see that  $w-w'=x+1$, and $\mathcal{C}-\mathcal{C}'=(w+1)x+w+x(w-x-1)$. Thus, by induction hypothesis
\begin{equation}\label{eq9}
\M(\mathcal{D}')=2^{\mathcal{C}'-w'(w'+1)/2}=2^{\mathcal{C}-(w+1)x-w-x(w-x-1)-(w-x-1)(w-x)/2}.
\end{equation}
Moreover, by Proposition \ref{prop1}, we get
 \begin{equation}\label{eq7}
 \M(\mathcal{D}')=|\Pi_{w-x-1}(a_2-a_1-2\dotsc,a_{k-1}-a_1-2)|.
 \end{equation}
By (\ref{eq1}), (\ref{eq6}), (\ref{eq9}) and (\ref{eq7}), we obtain
 \begin{align}\label{eq8}
\M(D_{a}(d_1,\dotsc,d_k))&=2^{\sum_{i=0}^{x-1}(w-i)}\M(\mathcal{D}')\notag\\
&=2^{\sum_{i=0}^{x-1}(w-i)}2^{\mathcal{C}-(w+1)x-w-x(w-x-1)-(w-x-1)(w-x)/2},
\end{align}
which implies (\ref{gendougeq}).

\medskip
\quad\textit{Case II. $d_k$ is odd.}

\medskip
 Assume that $d_k=a_1=2x+1$, for some $x\geq0$. By (\ref{constraint3}), (\ref{constraint4}), and (\ref{constraint5}), we have
\begin{equation}\label{constraint6}
2p+k-1=2p+m+n=\sum_{i=1}^{k-1}d_i+2x+1\geq 2x+1+k-1.
\end{equation}
Thus, $p\geq x+1$, and by (\ref{constraint2}), we imply $w=p+q\geq x+1$. Note
that the last layer has now one row of black down-pointing triangles right below the last drawn-in diagonal. Thus, $l\geq 1$, and by (\ref{constraint1}), $a=p+l\geq x+2$.

We have also the two equalities (\ref{eq1})  and (\ref{eq4}) as in Case 1. We apply (\ref{eq4}) $x$ times, and obtain
 \begin{equation}\label{eq5'}
 |\Pi_w(a_1,\dotsc,a_{k-1})|=2^{\sum_{i=0}^{x-1}(w-i)}|\Pi_{w-x}(1,a_2-a_1+1\dotsc,a_{k-1}-a_1+1)|.
 \end{equation}
By Propositions \ref{prop2} and \ref{prop3}, we have
\begin{equation}\label{eq4'}
\M(\mathcal{D})=2^{\sum_{i=0}^{x-1}(w-i)}2^{w-x}|\Lambda_{w-x}(1,a_2-a_1+1\dotsc,a_{k-1}-a_1+1)|.
\end{equation}

There are also two subcases to distinguish, based on the value of $w$.

\medskip
\quad\textit{Case II.1. $w=x+1$.}

\medskip
By (\ref{constraint2}), we have $q=0$ and $p=x+1$. The equality (\ref{constraint6}) implies that $\sum_{i=1}^{k-1}d_i=k$.
Moreover, if $d_i=2$ for some $1<i\leq k-1$, then the $(i-1)$th layer has a row of up-pointing triangles with bases resting on the $i$th drawn-in diagonal,
 a contradiction to the fact that $q=0$. Therefore, we must have $d_1=2$ and $d_2=d_3=\dotsc=d_{k-1}=1$ (see Figure \ref{schrodercase2} for an example of this case).

We have now
\[|\Lambda_{w-x}(1,a_2-a_1+1\dotsc,a_{k-1}-a_1+1)|=|\Lambda_{1}(1,3,5,\dotsc,2k-3)|.\]
We can partition $\Lambda_{1}(1,3,5,\dotsc,2k-3)=\bigcup_{i=1}^kS_i$, where $S_k$ is the set of paths starting by exactly $i$ up steps. It is easy to see that $|S_i|$ is the number of lattice paths using
$(1,-1)$ and $(2,0)$ steps from $(-2k+1+i,i)$ to $(0,1)$ (see Figure \ref{schrodercase21} for an example with $k=6$; the black dots indicate the points $(-2k+1+i,i)$'s). Thus,  $|S_i|=\binom{i-1}{k-1}$, for $1\leq i\leq k$, and
\[|\Lambda_{w-x}(1,a_2-a_1+1\dotsc,a_{k-1}-a_1+1)|=|\Lambda_{1}(1,3,5,\dotsc,2k-3)|=\sum_{i=1}^{k}\binom{i-1}{k-1}=2^{k-1}.\]
By (\ref{eq4'}), we have $\M(\mathcal{D})=2^{k-1+\sum_{i=0}^{x}(w-i)}$. It is easy to see that $a=w+k-1$ and $\mathcal{C}=(w+1)x+a$, so (\ref{gendougeq}) follows.
\begin{figure}\centering
\includegraphics[width=6cm]{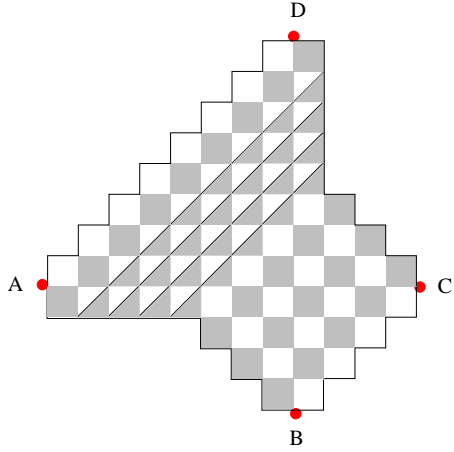}
\caption{Illustrating the region in Case II.1}
\label{schrodercase2}
\end{figure}

\begin{figure}\centering
\includegraphics[width=8cm]{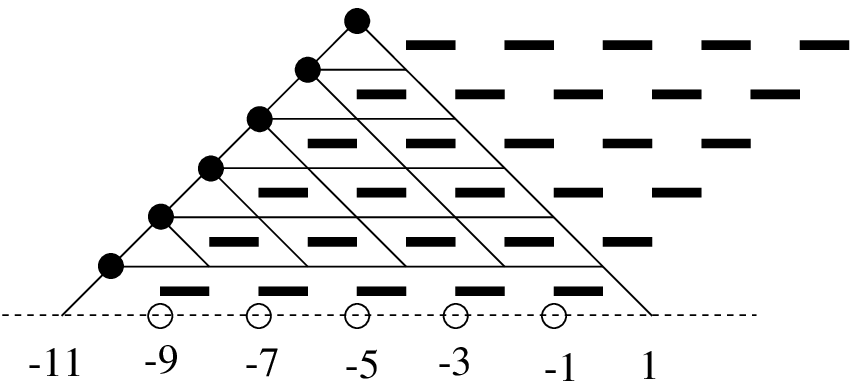}
\caption{}
\label{schrodercase21}
\end{figure}

\begin{figure}\centering
\includegraphics[width=10cm]{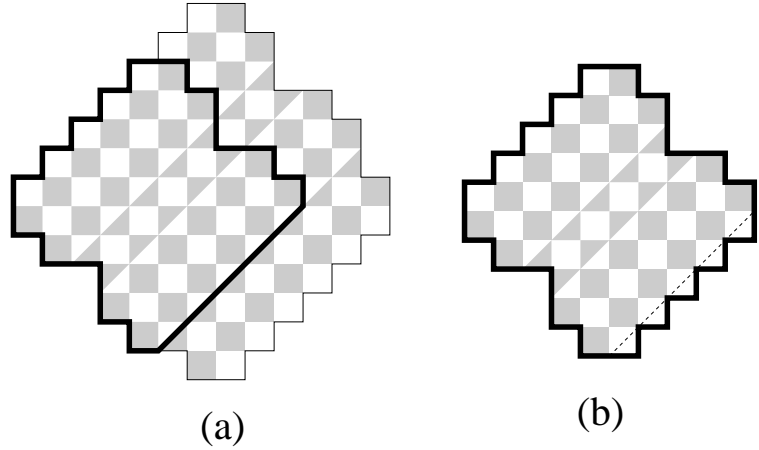}
\caption{Comparison of two regions $\mathcal{D}$ and $\mathcal{D}''$ (right).}
\label{compare4}
\end{figure}

\medskip
\quad\textit{Case II. 2. $w>x+1$.}

\medskip

By  (\ref{eq4'}) and Proposition \ref{prop4}(b), we obtain
  \begin{align}\label{eq6'}
 \M(\mathcal{D})&=2^{\sum_{i=1}^{x}(w-i)}|\Lambda_{w-x}(1,a_2-a_1+1\dotsc,a_{k-1}-a_1+1)|\notag\\
&=2^{\sum_{i=1}^{x}(w-i)} |\Pi_{w-x-1}(a_2-a_1-1\dotsc,a_{k-1}-a_1-1)|.
 \end{align}
Consider the region $\mathcal{D}'':=D_{a-x-1}(d_1,\dotsc,d_{k-1})$ (note that we already showed that $a\geq x+2$ at the begining of Case II). Assume that $\mathcal{C}''$ is the number of regular black
 cells in $\mathcal{D}''$, and $w''$ is the width of $\mathcal{D}''$. The region $\mathcal{D}''$ is obtained from $\mathcal{D}$
 by removing its last layer, reducing the length of all remaining rows of cells by $x+1$ units
  (see the region restricted by the bold contour in Figure \ref{compare4}(a)), and replacing the bottom row of white triangles in the resulting region by a row of white squares
   (see Figure \ref{compare4}(b)). Therefore, $w-w''=x$ and $\mathcal{C}-\mathcal{C}''=(w+1)x+(x+1)(w-x)$. By induction hypothesis, we have
\begin{equation}\label{eq9'}
\M(\mathcal{D}'')=2^{\mathcal{C}''-w''(w''+1)/2}=2^{\mathcal{C}-(w+1)x-(x+1)(w-x)-(w-x)(w-x+1)/2}.
\end{equation}
Moreover, by Proposition \ref{prop1}, we get
 \begin{equation}\label{eq7'}
 \M(\mathcal{D}'')=|\Pi_{w-x-1}(a_2-a_1-1\dotsc,a_{k-1}-a_1-1)|.
 \end{equation}
Finally, by (\ref{eq6'}),  (\ref{eq9'})  and (\ref{eq7'}), we obtain
 \begin{align}\label{eq8'}
 \M(D_{a}(d_1,\dotsc,d_k))&=2^{\sum_{i=0}^{x}(w-i)} \M(\mathcal{D}'')\notag\\
 &=2^{\sum_{i=0}^{x}(w-i)}2^{\mathcal{C}-(w+1)x-(x+1)(w-x)-(w-x)(w-x+1)/2},
 \end{align}
which implies (\ref{gendougeq}).
\end{proof}

\section{Concluding remarks}
We have two other proofs of the main theorem, Theorem \ref{gendoug}, using Kuo's graphical condensation \cite{Kuo} and a certain reduction theorem due to Propp \cite{propp}, respectively.
We present them in a subsequent paper \cite{Tri}.

\end{document}